\documentclass[11pt]{article}

\usepackage[T1]{fontenc}
\usepackage[utf8]{inputenc}
\usepackage{amsmath,amssymb,amsfonts}
\usepackage{amsthm}
\usepackage{geometry}
\newcommand{\E}{\mathbb E}
\newcommand{\Bin}{\operatorname{Bin}}
\newcommand{\Z}{\mathbb Z}
\newcommand{\R}{\mathbb R}
\newcommand{\N}{\mathbb N}
\newcommand{\At}{\widetilde A}
\newcommand{\Ah}{\widehat A}
\newcommand{\Bh}{\widehat B}
\providecommand{\backmatter}{}
\providecommand{\texorpdfstring}[2]{#1}

\theoremstyle{plain}
\newtheorem{theorem}{Theorem}[section]
\newtheorem{lemma}[theorem]{Lemma}
\newtheorem{proposition}[theorem]{Proposition}
\newtheorem{corollary}[theorem]{Corollary}
\theoremstyle{remark}
\newtheorem{remark}[theorem]{Remark}
\theoremstyle{definition}
\newtheorem{definition}[theorem]{Definition}
\newtheorem{example}[theorem]{Example}

\title{Binomial probabilities at a fixed distance from the mode:\\
size-biasing and the complete asymptotic expansion}

\author{
N.~Elezovi\'c\\
Department of Applied Mathematics,\\
Faculty of Electrical Engineering and Computing,\\
University of Zagreb, 10000 Zagreb, Croatia\\
\texttt{neven.elezovic@fer.hr}
}

\begin{document}
\maketitle

\begin{abstract}
	We study binomial probabilities at a fixed integer distance from the upper mode, in the
	regime where the fractional part of the mean remains visible.  The complete asymptotic
	expansion is obtained to all orders, uniformly for the success probability in compact
	subintervals of $(0,1)$ and for bounded shifts from the mode.

	The main point is structural.  A logarithmic tail in the standard local expansion is exactly
	the binomial size-bias factor, and removing it leaves a pure Appell expansion.  A second,
	symmetric normalisation is governed by an even Appell sequence and recovers the classical
	Stirling prefactor.  The resulting coefficients explain the exact binomial mode rule, recover
	the central-binomial expansions as a special case, and give closed forms for averaged
	oscillating coefficients.
\end{abstract}

\medskip
\noindent
\textbf{Keywords.} Binomial distribution, local limit theorem, mode, asymptotic expansion,
Bernoulli polynomials, Appell sequence, size-biasing, Euler--Maclaurin formula.

\medskip
\noindent
\textbf{2020 Mathematics Subject Classification.} 60C05, 41A60, 11B68, 33B15.

\section{Introduction}

\subsection{The problem}

Let $X\sim\Bin(N,p)$, $0<p<1$, $q=1-p$, and write
\[
	b(m):=b(m;N,p)=\Pr\{X=m\}=\binom{N}{m}p^{m}q^{N-m}.
\]
The De~Moivre--Laplace theorem says that $b(m)\approx(2\pi Npq)^{-1/2}$ when $m$ is at the
mode.  We ask for the complete expansion in this regime, and for what it fixes about the two most
	basic descriptors of the law at its peak: \emph{where} the peak sits (the mode) and
	\emph{how high} it is (the mass there).  Both are elementary quantities, yet the answer is
	surprisingly rigid; and both feed the companion study of the mean absolute deviation, where
	De~Moivre's identity expresses the first absolute moment through precisely this peak mass.

The results are meant for problems in which the binomial law is sampled or compared at its
largest lattice masses, rather than in a central-limit window scaled by $\sqrt N$.  Such
probabilities enter sharp estimates of the binomial mode and modal mass, local approximations to
finite binomial likelihoods, continuity corrections in lattice asymptotics, and exact formulae
for absolute deviations.  They also occur as building blocks in the surrounding series of papers
on total variation statistics: the local mass at the mode is the normalising object behind
De~Moivre's formula, and fixed displacements from the mode appear when one compares neighbouring
lattice cells or sums short windows around the maximum.  In these uses the oscillation of the
fractional part of $Np$ is not an error term; it is part of the coefficient itself.

The question has an arithmetic component which must be kept explicit.
The upper mode of $\Bin(N,p)$ is $\lfloor(N+1)p\rfloor$, an integer, whereas the mean $Np$ is
not.
Set
\begin{equation}\label{eq:setup}
	\nu=\lceil Np\rceil,\quad
	h=h_N=\nu-Np\in[0,1),\quad
	m=\nu+r\quad(r\in\Z\ \text{fixed}),\quad
	t=m-Np=h+r .
\end{equation}
The quantity $h_N$ is the fractional distance from the mean up to the next lattice point.  For
irrational $p$ it is equidistributed in $[0,1)$ and has no limit; for rational
$p=a/b$ it is periodic with period $b$.  Any complete expansion of $b(\nu+r)$ must therefore
carry $h_N$ as a parameter, and the coefficients cannot converge.

This is precisely the situation studied in \cite{paper22} for the displacement $h_N$ alone
(that is, for $r=0$), where it arises from De~Moivre's classical closed form for the mean
absolute deviation.  The present paper answers the natural next question: what happens at
$\nu+r$, and what, exactly, does the integer $r$ contribute?

\subsection{What is known}

Three strands meet here.

\emph{(i) Binomial coefficients near the centre.}  For $p=\tfrac12$ the mass is a binomial
coefficient, and the expansion of $\binom{2x}{x-r}$ in powers of $x^{-1}$, with $r$ a fixed
integer, is classical territory; a complete treatment with explicit Bernoulli coefficients is
given in \cite[\S3]{be-binom}, and the central case $r=0$, together with the Catalan numbers, in
\cite{elezovic_cbc}.  There the shift $r$ enters through power sums $1^{k}+\dots+(r-1)^{k}$,
produced by iterating the Appell difference identity.  But that setting is very special: it takes
$p=\tfrac12$ \emph{and} $N=2x$ even, so that the mean $Np=x$ is an \emph{integer}, whence
$h_N\equiv0$ and there is no oscillation at all.  (For $p=\tfrac12$ and odd $N$ one has instead
$h_N=\tfrac12$, which is precisely the two-mode case; the theorem below covers it, and
Proposition~\ref{prop:mode} locates the two modes at $r=0$ and $r=-1$.)

\emph{(ii) The local mass at an arbitrary real displacement.}  In \cite[Theorem 3.1]{paper22} the
expansion of $\pi_N(t;p)=\Pr\{X=Np+t\}$ is obtained for arbitrary real $t$, uniformly on
compacta, from a lemma on quotients of gamma functions with unequal scalings.  Its coefficients
are
\begin{equation}\label{eq:Ak}
	A_k(t;p)=\frac{(-1)^{k+1}}{k(k+1)}
	\Bigl[B_{k+1}-\bigl(p^{-k}+(-1)^{k+1}q^{-k}\bigr)B_{k+1}(t)\Bigr]
	+\frac{(-1)^{k}t^{k}}{k\,p^{k}} ,
\end{equation}
a Bernoulli part plus what looked, at the time, like an unavoidable elementary tail.

\emph{(iii) The mean absolute deviation.}  De~Moivre's identity
$\E|X-Np|=2\nu q\,b(\nu;N,p)$ with $\nu=\lceil Np\rceil$ turns the mean absolute deviation into
a single local mass, and \cite[Theorem 4.2]{paper22} expands it.  In that passage a cancellation occurs:
the prefactor $\nu$ produced a logarithm whose expansion cancelled the elementary tail of
\eqref{eq:Ak} \emph{term by term}, leaving a pure Appell series.  This term-by-term cancellation
was recorded in \cite[\S8]{paper22}, with the question of whether it reflects a general principle
for Appell sequences; the present paper answers that question.

\medskip\noindent
It is worth saying explicitly why the peak region is \emph{not} covered by the standard local
asymptotics of the binomial: each of them is built for a regime in which the arithmetic of the
mode --- the very thing at issue here --- plays no role.

\begin{itemize}
\item It is not a Stirling or entropy expansion of $\binom Nm$.  Those --- the classical
	$b(N\rho;N,p)\approx e^{-ND(\rho\|p)}\bigl(2\pi N\rho(1-\rho)\bigr)^{-1/2}$ and its
	refinements --- take the \emph{rate} $\rho=m/N$ as the variable and are uniform in $\rho$ on
	compacta of $(0,1)$; see e.g.\ \cite{feller}.  Our $m$ sits at a bounded distance from the
	mode, so $\rho\to p$, and the entropy factor degenerates: what remains is exactly the local
	expansion, and the interesting variable is the \emph{lattice} displacement $t$, not $\rho$.
	(The symmetric normalisation of Theorem~\ref{thm:sym} does recover the entropy prefactor; see
	Remark~\ref{rem:stirling}.)
\item It is not a central-binomial expansion with explicit error bounds.  Those
	(\cite{be-binom,elezovic_cbc} and their ancestors) live at $p=\tfrac12$ with $Np$ an integer,
	so the fractional part is absent by construction; this is precisely the collapse whose loss is
	the subject of \S\ref{sec:checks}.
\item It is not a local Edgeworth or lattice limit theorem.  Those organise the mass by
	$x=(m-Np)/\sqrt{Npq}$ and are designed for $x$ growing with $N$; see \cite{petrov}.  For
	bounded $t$ --- our case --- they are coarser, because they mix orders as soon as $h_N$
	oscillates.  Remark~\ref{rem:edgeworth} shows the two agree where they overlap.
\end{itemize}

\noindent
What is specific here is the organisation \emph{at a fixed integer distance from the upper mode},
with the oscillating $h_N$ retained exactly as a parameter of the coefficients.

\subsection{What is new}

The statements below are short, and so are their proofs, because the right structural device
--- size-biasing, and the two Appell sequences it produces --- makes them so; the content lies
in that device and in two consequences it delivers at once.  The coefficient of $N^{-1}$
reproduces the \emph{exact} mode rule (\S\ref{sec:mode}), and the size-bias identity settles
an open question of the companion paper on the mean absolute deviation (item~1).  We take the
three structural findings in turn.

\medskip\noindent
\textbf{1.  The elementary tail is the size-bias factor.}  With $m=Np+t$,
\[
	\sum_{k\ge1}\frac{1}{N^{k}}\cdot\frac{(-1)^{k}t^{k}}{k\,p^{k}}
	=-\log\Bigl(1+\frac{t}{Np}\Bigr)=-\log\frac{m}{Np},
\]
exactly.  Thus the tail is a single closed factor, and $\bigl(m/(Np)\bigr)b(m;N,p)$ has a purely
Appell expansion.  The mechanism is the size-bias identity
\begin{equation}\label{eq:sizebias-intro}
	m\,b(m;N,p)=Np\;b(m-1;N-1,p),
\end{equation}
which is not new: it is elementary --- both sides equal $Np\binom{N-1}{m-1}p^{m-1}q^{N-m}$ ---
and it is classical.  In the form relevant to the mean absolute deviation it goes back to
Uspensky~\cite[Ch.~IX]{uspensky}, who records
$\E|X-Np|=2Npq\,b(\nu-1;N-1,p)$ directly; the equivalent forms, and the reading of De~Moivre's
formula as ``twice the variance times the density at the mode'', are collected with their history
by Diaconis and Zabell~\cite[pp.~289--290]{diaconis_zabell}, and the underlying coupling
$X^{\mathrm s}\overset{d}{=}1+\Bin(N-1,p)$ is the textbook size bias of the binomial
\cite{agk}.  What is new here is not the identity but its \emph{asymptotic} role.  The elementary
tail of the naive expansion \eqref{eq:Ak} is \emph{exactly} $-\log(m/(Np))$; the size-bias factor
$m/(Np)$ therefore removes it term by term, and the coefficients become pure Appell.  This also
explains the cancellation observed in \cite[\S8]{paper22}:
De~Moivre's prefactor \emph{is} the size-bias factor, so that
$\E|X-Np|=2Npq\,\Pr\{\Bin(N-1,p)=\nu-1\}$ has a pure Appell expansion.  See Theorem~\ref{thm:pure} and
Remark~\ref{rem:sizebias}.

\medskip\noindent
\textbf{2.  Three normalisations, and two Appell sequences.}  Appell purity and the exact symmetry
$b(m;N,p)=b(N-m;N,q)$ are incompatible, and the obstruction is explicit
(Proposition~\ref{prop:asym}).  Restoring the symmetry replaces $B_n(t)$ by the even Appell
sequence
\[
	\Bh_n(t)=\tfrac12\bigl[B_n(t)+B_n(t+1)\bigr],
	\qquad
	\sum_{n\ge0}\Bh_n(t)\frac{z^{n}}{n!}=\frac z2\coth\frac z2\;e^{tz},
\]
and returns exactly the classical Stirling (entropy) prefactor
$\bigl(2\pi m(N-m)/N\bigr)^{-1/2}$; see Theorem~\ref{thm:sym} and Remark~\ref{rem:stirling}.
The naive normalisation is \emph{not} governed by an Appell sequence at all: its coefficients
\eqref{eq:Ak} are a Bernoulli part plus an elementary tail, and no single Appell sequence absorbs
both.  The three are summarised in the following table, in which
$\Xi_k(p)=(-1)^{k+1}p^{-k}+q^{-k}$, $m=Np+t$, and the last column names the summation rule by
which the integer shift $r$ enters (Theorem~\ref{thm:split}).

\begin{center}\small
\begin{tabular}{llll}
\hline
normalisation & prefactor & logarithmic coefficient & rule for $r$\\
\hline
naive \eqref{eq:old}
	& $(2\pi Npq)^{-1/2}$
	& $A_k=\At_k+\dfrac{(-1)^{k}t^{k}}{k\,p^{k}}$
	& ---\\[6pt]
size-biased \eqref{eq:pure}
	& $\dfrac{Np}{m}\,(2\pi Npq)^{-1/2}$
	& $\At_k=\dfrac{(-1)^{k+1}B_{k+1}-\Xi_k(p)B_{k+1}(t)}{k(k+1)}$
	& left endpoint\\[8pt]
entropy \eqref{eq:sym}
	& $\bigl(2\pi\,m(N-m)/N\bigr)^{-1/2}$
	& $\Ah_k=\dfrac{(-1)^{k+1}B_{k+1}-\Xi_k(p)\Bh_{k+1}(t)}{k(k+1)}$
	& trapezoidal\\
\hline
\end{tabular}
\end{center}

\noindent
The size-biased row is pure Appell but not symmetric under $p\leftrightarrow q$; the entropy row
is symmetric but its Appell sequence is $\Bh$, not $B$; and the naive row is neither.

\medskip\noindent
\textbf{3.  The shift $r$ is elementary, and the dichotomy is Euler--Maclaurin.}  All the
Bernoulli content sits in $h$; the integer $r$ enters only through a power sum along
$h,h+1,\dots$, and \emph{which} power sum is decided by the normalisation:
\[
	B_{n}\ \longleftrightarrow\ \text{left endpoint rule},
	\qquad
	\Bh_{n}\ \longleftrightarrow\ \text{trapezoidal rule};
\]
see Theorem~\ref{thm:split}.  This is the Euler--Maclaurin correspondence in elementary form:
$r$ contributes finite power sums, but no new special-function content.

\medskip\noindent
Theorem~\ref{thm:main} assembles the complete expansion.  Section~\ref{sec:mode} shows that the
coefficient of $N^{-1}$ already reproduces the \emph{exact} rule for the mode --- which is not
"the lattice point nearest the mean", the offset being the skewness correction $(p-q)/2$.
Section~\ref{sec:checks} recovers the central-case coefficients of \cite{be-binom} and shows why
the two collapses available there ($h=0$ and $p=q$) are exactly what is lost in general.
Section~\ref{sec:cesaro} averages the oscillation.  All computations have been verified
symbolically and numerically to $60$ digits; see Section~\ref{sec:numerics}.

From the point of view of applications, the most usable forms are Theorem~\ref{thm:main} and
Corollary~\ref{cor:window}.  The theorem gives any fixed local mass $\Pr\{X=\nu+r\}$ with the
oscillating lattice correction retained explicitly; the corollary packages the same information
for a fixed window around the mode.  Proposition~\ref{prop:mode} explains why the first correction
already contains the exact modal threshold, and Proposition~\ref{prop:cesaro} gives the averaged
coefficients when one does not want to track the individual oscillation in $N$.

\subsection{Conventions}

$B_n$ and $B_n(t)$ denote the Bernoulli numbers and polynomials, with
\[
	B_n:=B_n(0),\qquad\text{so in particular}\qquad B_1=-\tfrac12 .
\]
(The convention matters: unlike in \cite{paper22}, a $B_1$-term does occur here; see
Remark~\ref{rem:B1}.)  We use throughout the two structural identities of the Bernoulli Appell
sequence,
\begin{equation}\label{eq:appell}
	B_n(t+1)-B_n(t)=n\,t^{n-1},
	\qquad
	B_n(1-t)=(-1)^{n}B_n(t),
\end{equation}
and the binomial (Appell) translation $B_n(t+s)=\sum_{i=0}^{n}\binom ni B_i(t)\,s^{n-i}$.
Finally, the whole dependence on $p$ will be carried by
\begin{equation}\label{eq:Xidef}
	\Xi_k(p):=\frac{(-1)^{k+1}}{p^{k}}+\frac{1}{q^{k}}
	=\frac{p^{k}+(-1)^{k+1}q^{k}}{p^{k}q^{k}},
	\qquad\text{which obeys}\qquad
	\Xi_k(q)=(-1)^{k+1}\Xi_k(p).
\end{equation}
Reduced by $p+q=1$,
\begin{equation}\label{eq:Xilist}
	\Xi_1=\frac{1}{pq},\quad
	\Xi_2=\frac{p-q}{p^{2}q^{2}},\quad
	\Xi_3=\frac{1-3pq}{p^{3}q^{3}},\quad
	\Xi_4=\frac{(p-q)(1-2pq)}{p^{4}q^{4}},\quad
	\Xi_5=\frac{1-5pq+5p^{2}q^{2}}{p^{5}q^{5}} .
\end{equation}
Note that $\Xi_{2j}=0$ precisely when $p=q=\tfrac12$.

\section{The local mass, and the elementary tail}\label{sec:tail}

We take as given the following, which is \cite[Theorem 3.1]{paper22}.  For real $t$ with
$Np+t\in\{0,1,\dots,N\}$ put $\pi_N(t;p)=\Pr\{X=Np+t\}$.

\begin{theorem}[\cite{paper22}]\label{thm:old}
	Let $K\subset(0,1)$ and $H\subset\R$ be compact.  As $N\to\infty$, uniformly for admissible
	$p\in K$ and $t\in H$ with $Np+t\in\{0,1,\dots,N\}$,
	\begin{equation}\label{eq:old}
		\pi_N(t;p)\sim\frac{1}{\sqrt{2\pi Npq}}\,
		\exp\Bigl(\sum_{k\ge1}\frac{A_k(t;p)}{N^{k}}\Bigr),
	\end{equation}
	with $A_k$ as in \eqref{eq:Ak}.  For every $M$ the remainder after $M$ terms is
	$O(N^{-M-1})$ relative to the leading factor, with an implied constant depending only on
	$M$, $K$ and $H$.
\end{theorem}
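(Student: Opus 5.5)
Although the paper simply quotes Theorem~\ref{thm:old} from \cite{paper22}, it can be re-derived directly from the gamma-function form of the mass. With $m=Np+t$ and $N-m=Nq-t$ we have
\[
	\pi_N(t;p)=\frac{\Gamma(N+1)}{\Gamma(Np+t+1)\,\Gamma(Nq-t+1)}\,p^{Np+t}q^{Nq-t}.
\]
Since $N$, $Np+t$ and $Nq-t$ all tend to infinity uniformly for $p\in K$ and $t\in H$, we may take logarithms and use the shifted Stirling series
\[
	\log\Gamma(x+a+1)=(x+a+\tfrac12)\log x-x+\tfrac12\log 2\pi+\sum_{n\ge1}\frac{(-1)^{n+1}B_{n+1}(a+1)}{n(n+1)x^{n}},
\]
which holds uniformly for $a$ in compacta and comes with the standard $O(x^{-M-1})$ remainder bound. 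We apply it three times: with $x=N$, $a=0$; with $x=Np$, $a=t$; and with $x=Nq$, $a=-t$. This choice of base points is what produces the unequal scalings $p^{-k}$ and $q^{-k}$ in the coefficients.

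\textbf{The logarithmic part.} The main terms combine as follows:
\[
	(N+\tfrac12)\log N-(Np+t+\tfrac12)\log(Np)-(Nq-t+\tfrac12)\log(Nq)+(Np+t)\log p+(Nq-t)\log q .
\]
After cancellation this reduces exactly to $-\tfrac12\log(2\pi Npq)$, once the $\tfrac12\log2\pi$ constants from the three Stirling expansions are included. Note that the linear terms in $x$ cancel because $N=Np+Nq$. This produces the prefactor $(2\pi Npq)^{-1/2}$.

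\textbf{The coefficients.} What remains is a series in $N^{-k}$. Its $k$-th coefficient is
\[
	\frac{(-1)^{k+1}}{k(k+1)}\Bigl[B_{k+1}(1)-p^{-k}B_{k+1}(t+1)-q^{-k}B_{k+1}(1-t)\Bigr].
\]
To simplify this we use the identities \eqref{eq:appell}:
\[
	B_{k+1}(1-t)=(-1)^{k+1}B_{k+1}(t),\qquad B_{k+1}(t+1)=B_{k+1}(t)+(k+1)t^{k},\qquad B_{k+1}(1)=B_{k+1}\ (k\ge1).
\]
The difference term $(k+1)t^k$ contributes $\frac{(-1)^{k}t^{k}}{k\,p^{k}}$, which is exactly the elementary tail in \eqref{eq:Ak}, and the remaining terms give the Bernoulli part. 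This recovers $A_k$.

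\textbf{Uniformity.} The main obstacle is to make the error bounds uniform. The Stirling remainder after $M$ terms is bounded by $C_M(K,H)\,x^{-M-1}$ for $x$ large, with $a$ ranging over a compact set, and here $x\ge N\min K$. These bounds add to an $O(N^{-M-1})$ error in the exponent. Exponentiating then gives a relative error of the same order, because the truncated exponent is bounded.

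I would take the shifted Stirling expansion as classical, since it follows from the Binet/Hermite formula with Bernoulli polynomials. I would simply verify that its constants depend only on the compact range of the shift $a$.
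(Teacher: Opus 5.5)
Your derivation is correct and follows essentially the same route as the source. The paper does not reprove Theorem~\ref{thm:old} but quotes it from \cite{paper22}, where it is obtained from a lemma on quotients of gamma functions with unequal scalings; that is exactly your three shifted Stirling expansions at the base points $N$, $Np$, $Nq$, and your reduction of the resulting coefficients to \eqref{eq:Ak} via \eqref{eq:appell} checks out term by term. The only slip is minor: for the expansion at $x=Nq$ the lower bound should read $x\ge N\min_{p\in K}\min(p,q)$ rather than $x\ge N\min K$.
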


The uniformity in $t$ is the point: for a fixed pair $(p,h)$ the set of admissible $N$ is
typically empty, and only a statement uniform in $t$ can be applied to an oscillating
displacement such as $h_N$.  Since $t=h+r$ with $h\in[0,1)$ and $r$ a fixed integer, we have
$t\in[r,r+1)$, a compact set, and Theorem~\ref{thm:old} applies with $H=[r,r+1]$.

Throughout, $K\subset(0,1)$ and $H\subset\R$ are compact, and we abbreviate
\begin{equation}\label{eq:m-def}
	m=Np+t,\qquad N-m=Nq-t .
\end{equation}
The following elementary observation is used repeatedly and, although routine, is what preserves
the relative remainder $O(N^{-M-1})$ across the renormalisations of \S\ref{sec:three}; we
therefore record it.

\begin{lemma}[the prefactors are harmless]\label{lem:prefactors}
	Let $\delta=\min\{\min_{p\in K}p,\ \min_{p\in K}q\}>0$ and let $B=\max_{t\in H}|t|$.  Then for
	all $N>2B/\delta$ and all $p\in K$, $t\in H$,
	\[
		\frac{\delta}{2}\,N\;\le\;m\;\le\;2N,
		\qquad
		\frac{\delta}{2}\,N\;\le\;N-m\;\le\;2N,
		\qquad\text{hence}\qquad
		\Bigl|\frac{t}{Np}\Bigr|\le\frac{B}{\delta N}<\tfrac12,
		\quad
		\Bigl|\frac{t}{Nq}\Bigr|<\tfrac12 .
	\]
	Consequently $Np/m$ and $\sqrt{q\,m/(p\,(N-m))}$ are bounded above and below by positive
	constants depending only on $K$ and $H$, and their logarithms have convergent expansions in
	$N^{-1}$ whose coefficients are bounded uniformly on $K\times H$.
\end{lemma}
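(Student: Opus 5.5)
The plan is to derive everything from the two linear bounds $|t|\le B$ and $p,q\ge\delta$, with $N>2B/\delta$. For $p\in K$ we have $Np\ge\delta N$, and $|t|\le B<\delta N/2$. Hence
\[
	m=Np+t\ \ge\ \delta N-\tfrac{\delta}{2}N=\tfrac{\delta}{2}N,
	\qquad
	m\le N+B\le 2N ,
\]
where the last step uses $B<\delta N/2\le N$. The same argument with $Nq$ and $-t$ gives the bounds for $N-m=Nq-t$. The ratio bounds follow at once:
\[
	\Bigl|\frac{t}{Np}\Bigr|\le\frac{B}{\delta N}<\frac12
\]
by the choice of $N$, and likewise $|t/(Nq)|<\tfrac12$.

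For the consequences, I write
\[
	\frac{Np}{m}=\Bigl(1+\frac{t}{Np}\Bigr)^{-1}
	\qquad\text{and}\qquad
	\frac{q\,m}{p\,(N-m)}=\frac{1+t/(Np)}{1-t/(Nq)} .
\]
Since $|t/(Np)|<\tfrac12$ and $|t/(Nq)|<\tfrac12$, the first ratio lies in $[2/3,2]$. The second lies in $[1/3,3]$, so its square root is bounded above and below by positive constants that do not depend on $N$, $p$ or $t$.

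For the logarithms, I expand with $\log(1+x)=\sum_{k\ge1}(-1)^{k+1}x^k/k$. This series converges for $|x|<1$, and here $|x|\le B/(\delta N)<\tfrac12$. This gives
\[
	\log\frac{Np}{m}=-\sum_{k\ge1}\frac{(-1)^{k+1}t^k}{k\,p^k}\,N^{-k},
	\qquad
	\tfrac12\log\frac{qm}{p(N-m)}=\tfrac12\sum_{k\ge1}\frac{(-1)^{k+1}t^k p^{-k}+t^kq^{-k}}{k}\,N^{-k} .
\]
The $k$-th coefficients are bounded in absolute value by $(B/\delta)^k/k$, uniformly on $K\times H$. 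Truncating after $M$ terms therefore leaves a remainder bounded by a geometric tail, $\le 2(B/\delta)^{M+1}N^{-M-1}$, which is the $O(N^{-M-1})$ control needed later.

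The only point needing care is the uniformity: every constant must depend on $K$ and $H$ only through $\delta$ and $B$. This holds because $\delta$ and $B$ are finite and positive by compactness of $K\subset(0,1)$ and of $H$. There is no real obstacle beyond tracking that threshold on $N$.
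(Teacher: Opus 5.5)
Your proof is correct and follows the same route as the paper: the bounds on $m$ and $N-m$ come from $Np,Nq\ge\delta N$ and $|t|\le B<\delta N/2$, and the ratio bounds are then immediate. Your explicit treatment of the ``consequently'' clause (writing $qm/(p(N-m))=(1+t/(Np))/(1-t/(Nq))$, expanding both logarithms, and bounding the tail geometrically by $2(B/\delta)^{M+1}N^{-M-1}$) fills in what the paper's short proof leaves implicit and only carries out later, in Propositions~\ref{prop:tail} and~\ref{prop:asym}.
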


\begin{proof}
	$m=Np+t\ge\delta N-B\ge\delta N/2$ once $N\ge2B/\delta$, and $m\le N+B\le2N$; the same for
	$N-m=Nq-t$.  The two ratios are then $|t|/(Np)\le B/(\delta N)$ and likewise for $q$, both
	$<\tfrac12$ for $N>2B/\delta$.
\end{proof}

\begin{proposition}\label{prop:tail}
	For $|t|<Np$,
	\begin{equation}\label{eq:tail}
		\sum_{k\ge1}\frac{1}{N^{k}}\cdot\frac{(-1)^{k}t^{k}}{k\,p^{k}}
		=-\log\Bigl(1+\frac{t}{Np}\Bigr)=-\log\frac{m}{Np},
	\end{equation}
	the series being convergent.  In truncated form, uniformly for $p\in K$ and $t\in H$ and for
	every $M\ge1$,
	\begin{equation}\label{eq:tail-trunc}
		\sum_{k=1}^{M}\frac{1}{N^{k}}\cdot\frac{(-1)^{k}t^{k}}{k\,p^{k}}
		=-\log\frac{m}{Np}+O\bigl(N^{-M-1}\bigr),
	\end{equation}
	with an implied constant depending only on $M$, $K$ and $H$.
\end{proposition}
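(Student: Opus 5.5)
The identity \eqref{eq:tail} is the Mercator series for $\log(1+x)$ evaluated at $x=t/(Np)$, and the plan is simply to make that substitution explicit and then control the truncation uniformly.

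First I rewrite the $k$-th summand as
\[
\frac{1}{N^{k}}\cdot\frac{(-1)^{k}t^{k}}{k\,p^{k}}=-\frac{(-1)^{k+1}}{k}\Bigl(\frac{t}{Np}\Bigr)^{k}.
\]
For $|x|<1$ we have $\log(1+x)=\sum_{k\ge1}(-1)^{k+1}x^{k}/k$, with absolute convergence. The hypothesis $|t|<Np$ is exactly $|x|<1$ for $x=t/(Np)$. Summing therefore gives $-\log(1+t/(Np))$, and $1+t/(Np)=(Np+t)/(Np)=m/(Np)$ by \eqref{eq:m-def}. This proves \eqref{eq:tail} together with convergence.

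For \eqref{eq:tail-trunc} I invoke Lemma~\ref{lem:prefactors}. For $N>2B/\delta$ it gives $|x|\le B/(\delta N)<\tfrac12$ uniformly on $K\times H$. The truncation error is the tail $\sum_{k>M}(-1)^{k+1}x^{k}/k$, whose modulus is at most
\[
\sum_{k>M}|x|^{k}=\frac{|x|^{M+1}}{1-|x|}\le 2|x|^{M+1}\le 2\Bigl(\frac{B}{\delta}\Bigr)^{M+1}N^{-M-1}.
\]
The constant depends only on $M$, $K$ and $H$ through $\delta$ and $B$.

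The finitely many $N\le 2B/\delta$ are absorbed by enlarging the constant. For each such $N$, both sides are bounded uniformly over $p\in K$ and $t\in H$ whenever the logarithm is defined. This holds in the admissible range $m\ge1$, where $m/(Np)$ is bounded away from $0$ because $m\ge1$ and $Np\le N$.

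There is no real obstacle here. The only point needing care is the uniformity, that is, making the bound on $|x|$ independent of $p$ and $t$. Lemma~\ref{lem:prefactors} supplies exactly that bound.
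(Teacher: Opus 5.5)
Your proof is correct and is essentially the paper's: the same Mercator substitution $z=t/(Np)$, the same appeal to Lemma~\ref{lem:prefactors} for the uniform bound $|z|<\tfrac12$, and the same geometric tail estimate (the paper keeps the factor $1/(M+1)$ and gets $2|z|^{M+1}/(M+1)$, which changes nothing). Using $|t|<Np$ directly for convergence in \eqref{eq:tail} and treating the finitely many small $N$ separately are minor additions that the paper leaves implicit; the small-$N$ step relies on $m$ being a positive lattice point, which is the setting in which the proposition is applied.
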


\begin{proof}
	Put $z=t/(Np)$.  By Lemma~\ref{lem:prefactors}, $|z|\le B/(\delta N)<\tfrac12$ for
	$N>2B/\delta$, so $\sum_{k\ge1}(-1)^{k}z^{k}/k=-\log(1+z)$ converges, which is \eqref{eq:tail};
	and the tail of the series is bounded by $2|z|^{M+1}/(M+1)\le CN^{-M-1}$, which is
	\eqref{eq:tail-trunc}.
\end{proof}

The distinction matters and we insist on it: \eqref{eq:tail} is an \emph{exact, convergent}
identity in $N^{-1}$, whereas \eqref{eq:old} is an \emph{asymptotic} expansion.  It is
\eqref{eq:tail-trunc} that lets the two be combined without ambiguity: one truncates
\eqref{eq:old} at order $M$, subtracts the truncated tail, and replaces it by the exact factor
$Np/m$, the two errors being of the same order $O(N^{-M-1})$.  Nothing is exponentiated
formally.

Thus the elementary tail of \eqref{eq:Ak} is not an additional asymptotic residue: it is one
closed factor, and it depends on $t$ only through the lattice point $m$ itself.  Removing it gives
the natural object.

\begin{theorem}[Pure Appell normalisation]\label{thm:pure}
	Under the hypotheses of Theorem~\ref{thm:old},
	\begin{equation}\label{eq:pure}
	\begin{gathered}
		b(m;N,p)\sim\frac{Np}{m}\cdot\frac{1}{\sqrt{2\pi Npq}}\;
		\exp\Bigl(\sum_{k\ge1}\frac{\At_k(t;p)}{N^{k}}\Bigr),
		\\
		\At_k(t;p)=\frac{(-1)^{k+1}B_{k+1}-\Xi_k(p)\,B_{k+1}(t)}{k(k+1)} ,
	\end{gathered}
	\end{equation}
	with the same uniformity and the same remainder estimate: for every $M$, truncating the
	exponent after $M$ terms leaves a relative error $O(N^{-M-1})$, uniformly on $K\times H$.  The
	coefficients $\At_k$ contain nothing but Bernoulli numbers and Bernoulli polynomials: no power
	of $t$ stands outside a $B_{k+1}(t)$.
\end{theorem}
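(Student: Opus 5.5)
The plan is to deduce the statement directly from Theorem~\ref{thm:old}, using Proposition~\ref{prop:tail} to replace the elementary tail by the exact factor $Np/m$.

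First, an algebraic check. Split $A_k=\At_k+(-1)^{k}t^{k}/(k\,p^{k})$ and verify that the Bernoulli part of \eqref{eq:Ak} equals $\At_k$ as written in \eqref{eq:pure}. Multiplying out, the coefficient of $B_{k+1}(t)$ in \eqref{eq:Ak} is
\[
\frac{(-1)^{k+1}}{k(k+1)}\cdot\bigl(-(p^{-k}+(-1)^{k+1}q^{-k})\bigr)
=-\frac{(-1)^{k+1}p^{-k}+q^{-k}}{k(k+1)}=-\frac{\Xi_k(p)}{k(k+1)},
\]
by \eqref{eq:Xidef}. The constant term $(-1)^{k+1}B_{k+1}/(k(k+1))$ is unchanged. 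So the identity $A_k=\At_k+(-1)^kt^k/(kp^k)$ holds exactly, for every $k$ and every real $t$.

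Second, the analytic step. Fix $M$ and truncate \eqref{eq:old}:
\[
\pi_N(t;p)=(2\pi Npq)^{-1/2}\exp\Bigl(\sum_{k=1}^{M}A_k N^{-k}\Bigr)\bigl(1+O(N^{-M-1})\bigr),
\]
uniformly on $K\times H$. Split the finite sum into $\sum_{k\le M}\At_kN^{-k}$ plus the truncated tail. By \eqref{eq:tail-trunc}, the truncated tail equals $-\log(m/(Np))+O(N^{-M-1})$, uniformly on $K\times H$. Exponentiating gives the exact factor $Np/m$ times $\exp(O(N^{-M-1}))=1+O(N^{-M-1})$. This is legitimate because the $O$-term is uniformly bounded, so $|e^{x}-1|\le C|x|$ for $|x|\le 1$. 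Combining the two relative errors gives
\[
b(m;N,p)=\frac{Np}{m}(2\pi Npq)^{-1/2}\exp\Bigl(\sum_{k=1}^{M}\At_kN^{-k}\Bigr)\bigl(1+O(N^{-M-1})\bigr),
\]
with constants depending only on $M,K,H$. Here $N>2B/\delta$ is taken as in Lemma~\ref{lem:prefactors}, and the finitely many smaller $N$ are absorbed into the constant.

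The main point requiring care is this uniform bookkeeping of errors, since the identity itself is trivial. Two things need checking. First, the coefficients $\At_k$ must be bounded on $K\times H$, which holds because they are polynomials in $t$ and $p^{-1},q^{-1}$. Second, $Np/m$ is bounded above and below by Lemma~\ref{lem:prefactors}. Together these ensure the relative remainder is not distorted.

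The final purity claim, that no power of $t$ stands outside a $B_{k+1}(t)$, is read off from the displayed formula for $\At_k$.
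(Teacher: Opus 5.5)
Your proposal is correct and follows the paper's proof essentially step for step: the coefficientwise splitting $A_k=\At_k+(-1)^kt^k/(kp^k)$ via \eqref{eq:Xidef}, truncation of Theorem~\ref{thm:old} at order $M$, replacement of the truncated tail by $-\log(m/(Np))+O(N^{-M-1})$ through \eqref{eq:tail-trunc}, and Lemma~\ref{lem:prefactors} to keep the error relative. (Your remark about absorbing the finitely many small $N$ into the constant is unnecessary, and not literally possible when $m=0$ is admissible, because the claim is only asymptotic as $N\to\infty$.)
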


\begin{proof}
	By \eqref{eq:Ak} and \eqref{eq:Xidef} we have the coefficientwise identity
	\[
		A_k(t;p)=
		\frac{(-1)^{k+1}B_{k+1}-\Xi_k(p)B_{k+1}(t)}{k(k+1)}
		+\frac{(-1)^kt^k}{kp^k}
		=\At_k(t;p)+\frac{(-1)^kt^k}{kp^k}.
	\]
	Fix $M$.  Theorem~\ref{thm:old} gives
	\[
		b(m;N,p)=\frac{1}{\sqrt{2\pi Npq}}\,
		\exp\left(\sum_{k=1}^{M}\frac{A_k(t;p)}{N^k}\right)
		\bigl(1+O(N^{-M-1})\bigr),
	\]
	uniformly on $K\times H$.  Substituting the preceding splitting separates the exponent into
	\[
		\sum_{k=1}^{M}\frac{\At_k(t;p)}{N^k}
		+\sum_{k=1}^{M}\frac{(-1)^kt^k}{kN^kp^k}.
	\]
	By Proposition~\ref{prop:tail}, the second sum is
	$-\log(m/(Np))+O(N^{-M-1})$.  Since $e^{O(N^{-M-1})}=1+O(N^{-M-1})$, this replacement changes
	the relative error only by $O(N^{-M-1})$.  Hence
	\[
		b(m;N,p)=\frac{Np}{m}\cdot\frac{1}{\sqrt{2\pi Npq}}\,
		\exp\left(\sum_{k=1}^{M}\frac{\At_k(t;p)}{N^k}\right)
		\bigl(1+O(N^{-M-1})\bigr).
	\]
	Lemma~\ref{lem:prefactors} shows that $Np/m$ is bounded above and below by positive constants,
	uniformly on $K\times H$, so the error is still a relative error for the whole expression.
	This is the asserted asymptotic expansion.  No divergent series is being exponentiated:
	\eqref{eq:tail-trunc} is used only after a finite truncation has been chosen.
\end{proof}

Since $B_{k+1}=0$ for even $k\ge2$, formula \eqref{eq:pure} splits by parity into
\begin{equation}\label{eq:parity}
	\At_{2j-1}=\frac{B_{2j}-\bigl(p^{1-2j}+q^{1-2j}\bigr)B_{2j}(t)}{2j(2j-1)},
	\qquad
	\At_{2j}=\frac{\bigl(p^{-2j}-q^{-2j}\bigr)B_{2j+1}(t)}{2j(2j+1)} ,
\end{equation}
and these are \emph{exactly} the coefficients $a_k$ of \cite[Theorem 4.2]{paper22}.  Explicitly,
\begin{equation}\label{eq:Atlist}
	\At_1=\frac1{12}-\frac{B_2(t)}{2pq},\qquad
	\At_2=\frac{(q-p)B_3(t)}{6p^{2}q^{2}},\qquad
	\At_3=-\frac1{360}-\frac{(1-3pq)B_4(t)}{12p^{3}q^{3}} .
\end{equation}

\begin{remark}[What the cancellation was]\label{rem:sizebias}
	The size-bias identity
	\begin{equation}\label{eq:sizebias}
		m\,b(m;N,p)=Np\;b(m-1;N-1,p)
	\end{equation}
	is exact for all $N,m,p$ (both sides equal $Np\binom{N-1}{m-1}p^{m-1}q^{N-m}$) and classical; as
	recalled in the introduction, the point is not the identity but that it \emph{is} the elementary
	tail.  Applied to De~Moivre's formula \cite[Proposition 4.1]{paper22}, whose prefactor is
	$\nu=m$, it gives
	\begin{equation}\label{eq:mad-clean}
		\E|X-Np|=2\nu q\,b(\nu;N,p)=2Npq\;\Pr\{\Bin(N-1,p)=\nu-1\} :
	\end{equation}
	the mean absolute deviation is $2Npq$ times a local mass of $\Bin(N-1,p)$, whose expansion is
	pure Appell because the De~Moivre prefactor \emph{is} the size-bias factor and the elementary
	tail \emph{is} $-\log(m/(Np))$.  This is the exact analytic reason for the cancellation raised in
	\cite[\S8]{paper22}.
\end{remark}

\section{Three normalisations, and two Appell sequences}\label{sec:three}

\subsection{Appell normalisation and symmetry}

Since $X\sim\Bin(N,p)$ implies $N-X\sim\Bin(N,q)$, the exact symmetry
\begin{equation}\label{eq:sym-exact}
	b(m;N,p)=b(N-m;N,q),\qquad\text{i.e.}\qquad (t,p)\longmapsto(-t,q),
\end{equation}
holds identically.  The coefficients $A_k$ of \eqref{eq:Ak} respect it: $A_k(t;p)=A_k(-t;q)$.
The pure coefficients $\At_k$ do not, and the defect is explicit.

\begin{proposition}\label{prop:asym}
	For every $k\ge1$,
	\begin{equation}\label{eq:asym}
		\At_k(t;p)-\At_k(-t;q)=\frac{\Xi_k(p)\,t^{k}}{k},
	\end{equation}
	and
	\begin{equation}\label{eq:asym-sum}
		\sum_{k\ge1}\frac{\Xi_k(p)\,t^{k}}{k\,N^{k}}=\log\frac{q\,m}{p\,(N-m)} ,
	\end{equation}
	which is exactly the ratio of the two size-bias prefactors $Np/m$ and $Nq/(N-m)$.
\end{proposition}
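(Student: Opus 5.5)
The proof splits into an algebraic coefficient identity for \eqref{eq:asym} and a convergent series identity for \eqref{eq:asym-sum}. Both use only \eqref{eq:appell}, \eqref{eq:Xidef} and Proposition~\ref{prop:tail}.

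\emph{First identity.} Using \eqref{eq:pure}, I write
\[
\At_k(t;p)-\At_k(-t;q)=\frac{-\Xi_k(p)B_{k+1}(t)+\Xi_k(q)B_{k+1}(-t)}{k(k+1)}.
\]
The constant terms $(-1)^{k+1}B_{k+1}$ cancel. By \eqref{eq:Xidef}, $\Xi_k(q)=(-1)^{k+1}\Xi_k(p)$. The reflection in \eqref{eq:appell}, applied at $1+t$, gives $B_{k+1}(-t)=(-1)^{k+1}B_{k+1}(1+t)$. Hence
\[
\Xi_k(q)B_{k+1}(-t)=\Xi_k(p)B_{k+1}(t+1),
\]
because $(-1)^{2(k+1)}=1$. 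The numerator therefore becomes $\Xi_k(p)\bigl[B_{k+1}(t+1)-B_{k+1}(t)\bigr]$. By the difference identity in \eqref{eq:appell} this equals $\Xi_k(p)(k+1)t^k$. Dividing by $k(k+1)$ gives \eqref{eq:asym}.

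As a cross-check, $A_k(t;p)=A_k(-t;q)$ is asserted in the paper, and $A_k=\At_k+(-1)^kt^k/(kp^k)$. The difference must then equal
\[
\frac{(-t)^k}{kq^k}\cdot(-1)^k-\frac{(-1)^kt^k}{kp^k}
=\frac{t^k}{k}\Bigl(\frac1{q^k}+\frac{(-1)^{k+1}}{p^k}\Bigr)=\frac{\Xi_k(p)t^k}{k},
\]
which is consistent. In fact this gives a second, one-line proof of \eqref{eq:asym}, provided one is willing to cite the symmetry of $A_k$.

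\emph{Second identity.} I split
\[
\frac{\Xi_k(p)t^k}{kN^k}=\frac{(-1)^{k+1}}{k}\Bigl(\frac{t}{Np}\Bigr)^k+\frac1k\Bigl(\frac{t}{Nq}\Bigr)^k .
\]
Both series converge absolutely for $|t|<Np$ and $|t|<Nq$. Lemma~\ref{lem:prefactors} guarantees this on $K\times H$ for large $N$, and in general I assume it as the hypothesis. The first series sums to $\log(1+t/(Np))=\log(m/(Np))$; this is Proposition~\ref{prop:tail} with the sign reversed. The second sums to $-\log(1-t/(Nq))=-\log((N-m)/(Nq))$, using $N-m=Nq-t$.

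Adding the two gives
\[
\log\frac{m\,Nq}{Np\,(N-m)}=\log\frac{qm}{p(N-m)}.
\]
This is $\log$ of $\bigl(Nq/(N-m)\bigr)\big/\bigl(Np/m\bigr)$, the stated ratio of the two size-bias prefactors.

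The only delicate point is the sign bookkeeping in the reflection step: the reflection must be applied at $1+t$, not at $t$, so that the forward difference appears. The convergence conditions needed for the series identity are exactly those already recorded in Lemma~\ref{lem:prefactors}.
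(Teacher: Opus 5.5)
Your proof is correct and is essentially the paper's argument: for \eqref{eq:asym} you combine the reflection applied at $1+t$, the forward-difference identity and $\Xi_k(q)=(-1)^{k+1}\Xi_k(p)$, and for \eqref{eq:asym-sum} you split $\Xi_k(p)/k$ into the two logarithmic series $\log(1+t/(Np))$ and $-\log(1-t/(Nq))$. Your alternative one-line route via $A_k(t;p)=A_k(-t;q)$ is consistent, but the paper only asserts that symmetry without proof, so keeping the direct computation as the primary argument is the right choice.
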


\begin{proof}
	By \eqref{eq:appell},
	\[
		B_{k+1}(1+t)-B_{k+1}(t)=(k+1)t^k,
	\]
	and the reflection identity gives
	\[
		B_{k+1}(-t)=B_{k+1}\bigl(1-(1+t)\bigr)
		=(-1)^{k+1}B_{k+1}(1+t)
		=(-1)^{k+1}\bigl[B_{k+1}(t)+(k+1)t^k\bigr].
	\]
	Also $(-1)^{k+1}\Xi_k(q)=\Xi_k(p)$ by \eqref{eq:Xidef}.  Substituting these two identities
	into the definition of $\At_k$,
	\[
		\At_k(-t;q)=\frac{(-1)^{k+1}B_{k+1}-\Xi_k(p)\bigl[B_{k+1}(t)+(k+1)t^{k}\bigr]}{k(k+1)}
		=\At_k(t;p)-\frac{\Xi_k(p)t^{k}}{k},
	\]
	which is \eqref{eq:asym}.  For \eqref{eq:asym-sum}, split
	$\Xi_k(p)/k=\bigl[(-1)^{k+1}p^{-k}+q^{-k}\bigr]/k$ and sum the two logarithmic series:
	\[
		\sum_{k\ge1}\frac{(-1)^{k+1}}{k}\Bigl(\frac{t}{Np}\Bigr)^{k}
		+\sum_{k\ge1}\frac{1}{k}\Bigl(\frac{t}{Nq}\Bigr)^{k}
		=\log\Bigl(1+\frac{t}{Np}\Bigr)-\log\Bigl(1-\frac{t}{Nq}\Bigr)
		=\log\frac{m}{Np}-\log\frac{N-m}{Nq}.
	\]
	Both series converge, and both may be truncated with error $O(N^{-M-1})$ exactly as in
	Proposition~\ref{prop:tail}: by Lemma~\ref{lem:prefactors} we have $|t/(Np)|<\tfrac12$ and
	$|t/(Nq)|<\tfrac12$ for all large $N$, uniformly on $K\times H$, so the second logarithm ---
	which requires $|t|<Nq$ --- is legitimate for the same reason as the first.
\end{proof}

The asymmetry is therefore not an artefact of the calculation: it is the size-bias factor, which by
construction distinguishes $m$ from $N-m$.  If one requires symmetry, the Bernoulli Appell sequence
is replaced by the even sequence introduced below, and the corresponding prefactor is the classical
Stirling normalisation.

\subsection{The symmetric normalisation, and the sequence
\texorpdfstring{$\Bh_n$}{Bhat n}}

\begin{definition}
	Let $\Bh_n$ be the Appell sequence defined, for all $n\ge0$, by
	\begin{equation}\label{eq:Bhat}
		\Bh_n(t):=\tfrac12\bigl[B_n(t)+B_n(t+1)\bigr],
		\qquad\text{equivalently}\qquad
		\sum_{n\ge0}\Bh_n(t)\frac{z^{n}}{n!}=\frac{z}{2}\coth\frac{z}{2}\;e^{tz} .
	\end{equation}
	For $n\ge1$ this may be written as
	\begin{equation}\label{eq:Bhat-alt}
		\Bh_n(t)=B_n(t)+\tfrac n2\,t^{n-1}\qquad(n\ge1),
	\end{equation}
	by the difference identity \eqref{eq:appell}; for $n=0$ the averaged definition gives
	$\Bh_0=B_0=1$, whereas the right-hand side of \eqref{eq:Bhat-alt} would contain the
	meaningless term $0\cdot t^{-1}$.  Only \eqref{eq:Bhat} is used at $n=0$.
\end{definition}

The polynomial \eqref{eq:Bhat-alt} is not new.  It is exactly the reciprocal Bernoulli polynomial
$\mathbf B^{\diamond}_n(t)=\mathbf B_n(t)+\tfrac n2t^{n-1}$ of Kellner~\cite[eq.~(4.1)]{kellner},
introduced there --- with the same reflection relation \eqref{eq:Bhat-parity} --- as the object in
which the $B_1$-term is cancelled; equivalently it is the even part of $B_n$, since
$\Bh_n(t)=\tfrac12\bigl[B_n(t)+(-1)^{n}B_n(-t)\bigr]$ by
$(-1)^{n}B_n(-t)=B_n(t)+nt^{n-1}$ (DLMF~\cite[24.4.5]{dlmf}).  We keep the notation $\Bh_n$ and the
averaged form \eqref{eq:Bhat} for use at $n=0$, and because it is the average of the two
end-of-interval evaluations that the trapezoidal rule of \S\ref{sec:split} will require.

The generating factor is
$\tfrac z2\coth\tfrac z2=\dfrac{z}{e^{z}-1}+\dfrac z2=\sum_{n\ge0}B_{2n}\dfrac{z^{2n}}{(2n)!}$,
which is an \emph{even} function of $z$.  Consequently
\begin{equation}\label{eq:Bhat-parity}
	\Bh_n(-t)=(-1)^{n}\,\Bh_n(t)\qquad(n\ge0),
\end{equation}
a property the Bernoulli polynomials themselves do not have.  The first few are
\[
	\Bh_1=t,\quad \Bh_2=t^{2}+\tfrac16,\quad \Bh_3=t^{3}+\tfrac t2,\quad
	\Bh_4=t^{4}+t^{2}-\tfrac1{30}, 
\]
\[
	\Bh_5=t^{5}+\tfrac53t^{3}-\tfrac t6, \quad
	\Bh_6=t^{6}+\tfrac52t^{4}-\tfrac12t^{2}+\tfrac1{42} .
\]
Two nearby sequences should be kept apart from $\Bh_n$, since both share its parity and both have
an established name.  The \emph{central Bernoulli polynomials}
$B^{c}_n(t)$, generated by $\dfrac{z}{\sinh z}\,e^{tz}$ (Luschny; OEIS A335953), are a
\emph{different} sequence: $B^{c}_2(t)=t^{2}-\tfrac13$, against $\Bh_2(t)=t^{2}+\tfrac16$.  So is
$B_n(t+\tfrac12)$, generated by $\dfrac{z}{2\sinh(z/2)}\,e^{tz}$, which gives
$B_2(t+\tfrac12)=t^{2}-\tfrac1{12}$.  The three agree only for $n\le1$.  We therefore avoid the
name ``central'' and refer to $\Bh_n$ by \eqref{eq:Bhat}.

\begin{theorem}[Symmetric normalisation]\label{thm:sym}
	Under the hypotheses of Theorem~\ref{thm:old},
	\begin{align}
		b(m;N,p)\sim\frac{1}{\sqrt{2\pi\,m(N-m)/N}}\;
		\exp\Bigl(\sum_{k\ge1}\frac{\Ah_k(t;p)}{N^{k}}\Bigr),
		\label{eq:sym}\\
		\Ah_k(t;p)=\frac{(-1)^{k+1}B_{k+1}-\Xi_k(p)\,\Bh_{k+1}(t)}{k(k+1)} ,
		\notag
	\end{align}
	and these coefficients are symmetric:
	\begin{equation}\label{eq:Ahsym}
		\Ah_k(t;p)=\Ah_k(-t;q)\qquad(k\ge1).
	\end{equation}
	Moreover $\Ah_k=\At_k-\Xi_k(p)\,t^{k}/(2k)$.
\end{theorem}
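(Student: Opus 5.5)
The plan is to derive Theorem~\ref{thm:sym} from Theorem~\ref{thm:pure} by exchanging one exact, convergent prefactor for another, exactly as Theorem~\ref{thm:pure} was derived from Theorem~\ref{thm:old}.

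\emph{Step 1: the coefficient identity.} Using \eqref{eq:Bhat-alt} with $n=k+1\ge2$, we have $\Bh_{k+1}(t)=B_{k+1}(t)+\tfrac{k+1}{2}t^k$. Substituting into the definition of $\Ah_k$ gives
\[
\Ah_k=\At_k-\frac{\Xi_k(p)(k+1)t^k}{2k(k+1)}=\At_k-\frac{\Xi_k(p)t^k}{2k},
\]
which is the last assertion of the theorem.

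\emph{Step 2: the prefactor.} The ratio of the two prefactors is
\[
\frac{Np/m}{\sqrt{2\pi Npq}}\cdot\sqrt{2\pi m(N-m)/N}=\sqrt{\frac{p\,(N-m)}{q\,m}} .
\]
Its logarithm is $-\tfrac12\log\frac{qm}{p(N-m)}=-\tfrac12\sum_{k\ge1}\Xi_k(p)t^k/(kN^k)$, by \eqref{eq:asym-sum} of Proposition~\ref{prop:asym}.

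Now fix $M$ and start from the truncated form of \eqref{eq:pure}. The exponent is $\sum_{k\le M}\At_k N^{-k}$, and by Step~1 this equals $\sum_{k\le M}\Ah_kN^{-k}+\tfrac12\sum_{k\le M}\Xi_k t^k/(kN^k)$. The second sum is $\tfrac12\log\frac{qm}{p(N-m)}+O(N^{-M-1})$, uniformly on $K\times H$. This follows because each of the two logarithmic series in the proof of Proposition~\ref{prop:asym} has geometric tail bounded via Lemma~\ref{lem:prefactors}, since $|t/(Np)|,|t/(Nq)|<\tfrac12$.

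Exponentiating, the factor $\sqrt{qm/(p(N-m))}$ multiplies $Np/(m\sqrt{2\pi Npq})$ and produces $(2\pi m(N-m)/N)^{-1/2}$. The relative error stays $O(N^{-M-1})$ because $e^{O(N^{-M-1})}=1+O(N^{-M-1})$. The new prefactor is bounded above and below uniformly by Lemma~\ref{lem:prefactors}, so the error remains relative.

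\emph{Step 3: symmetry \eqref{eq:Ahsym}.} By \eqref{eq:Bhat-parity}, $\Bh_{k+1}(-t)=(-1)^{k+1}\Bh_{k+1}(t)$, and by \eqref{eq:Xidef}, $\Xi_k(q)=(-1)^{k+1}\Xi_k(p)$. Hence $\Xi_k(q)\Bh_{k+1}(-t)=\Xi_k(p)\Bh_{k+1}(t)$, while the constant term $(-1)^{k+1}B_{k+1}$ is unchanged. So $\Ah_k(-t;q)=\Ah_k(t;p)$.

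As a cross-check, the same result follows from Proposition~\ref{prop:asym}: $\Ah_k(t;p)-\Ah_k(-t;q)=\Xi_k t^k/k-\tfrac12\Xi_k(p)t^k/k-\tfrac12\Xi_k(q)(-t)^k/k$. Since $\Xi_k(q)(-t)^k=-\Xi_k(p)t^k$, this difference is $0$.

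\emph{Main obstacle.} There is no real obstacle; the only care needed is in Step~2. The exact convergent identity \eqref{eq:asym-sum} must be used only after truncation, with a remainder estimate uniform on $K\times H$, mirroring the proof of Theorem~\ref{thm:pure}. Writing out that uniform tail bound for the $q$-series is the one point I would check explicitly.
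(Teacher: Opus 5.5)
Your proposal is correct and follows the paper's proof essentially step for step: the coefficient identity from \eqref{eq:Bhat-alt}, the truncated use of \eqref{eq:asym-sum} to move $\tfrac12\log\frac{qm}{p(N-m)}$ into the prefactor, and the parity argument combining \eqref{eq:Bhat-parity} with $\Xi_k(q)=(-1)^{k+1}\Xi_k(p)$. There is one sign slip in your optional cross-check: $-\Ah_k(-t;q)$ contributes $+\Xi_k(q)(-t)^k/(2k)$, so the last term should carry a plus sign (as you wrote it, substituting $\Xi_k(q)(-t)^k=-\Xi_k(p)t^k$ gives $\Xi_k(p)t^k/k$, not $0$), but your primary argument in Step~3 is unaffected.
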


\begin{proof}
	From \eqref{eq:Bhat-alt} (with $n=k+1\ge2$) we have
	$\Bh_{k+1}(t)=B_{k+1}(t)+\tfrac{k+1}2t^{k}$, whence at once
	$\Ah_k=\At_k-\Xi_k(p)\,t^{k}/(2k)$.  Summing and using \eqref{eq:asym-sum},
	\[
		\sum_{k\ge1}\frac{\At_k(t;p)}{N^{k}}
		=\sum_{k\ge1}\frac{\Ah_k(t;p)}{N^{k}}
		 +\frac12\log\frac{q\,m}{p\,(N-m)} ,
	\]
	in the same truncated sense as in Proposition~\ref{prop:asym}.  More explicitly, after
	truncation at order $M$ the discarded part of the logarithmic series is
	$O(N^{-M-1})$, uniformly on $K\times H$, by Lemma~\ref{lem:prefactors}.  Hence multiplication
	by the exponential of one half of the logarithm changes only the displayed prefactor and
	preserves the relative remainder.  Thus Theorem~\ref{thm:pure} gives
	\[
	b(m;N,p)\sim\frac{Np}{m}\sqrt{\frac{q\,m}{p\,(N-m)}}\cdot\frac{1}{\sqrt{2\pi Npq}}\,
		\exp\Bigl(\sum_{k\ge1}\frac{\Ah_k(t;p)}{N^{k}}\Bigr).
	\]
	The prefactor simplifies:
	\begin{align*}
		\frac{Np}{m}\sqrt{\frac{q\,m}{p\,(N-m)}}\cdot\frac{1}{\sqrt{2\pi Npq}}
		&=\frac{N\sqrt{pq}}{\sqrt{m\,(N-m)}}\cdot\frac{1}{\sqrt{2\pi Npq}}\\
		&=\frac{\sqrt N}{\sqrt{2\pi\,m(N-m)}}
		=\frac{1}{\sqrt{2\pi\,m(N-m)/N}},
	\end{align*}
	which is \eqref{eq:sym}.  For \eqref{eq:Ahsym}, combine \eqref{eq:Bhat-parity} with
	\eqref{eq:Xidef}:
	\[
		\Xi_k(q)\,\Bh_{k+1}(-t)
		=(-1)^{k+1}\Xi_k(p)\cdot(-1)^{k+1}\Bh_{k+1}(t)
		=\Xi_k(p)\,\Bh_{k+1}(t).
	\]
	The Bernoulli-number term $(-1)^{k+1}B_{k+1}$ is unchanged under $(t,p)\mapsto(-t,q)$, and the
	last display shows that the only remaining term is unchanged as well.  Therefore
	$\Ah_k(-t;q)=\Ah_k(t;p)$ for every $k$.
\end{proof}

\begin{remark}[The symmetric normalisation is Stirling's]\label{rem:stirling}
	Put $\rho=m/N$.  The prefactor of \eqref{eq:sym} is
	$\bigl(2\pi N\rho(1-\rho)\bigr)^{-1/2}$ --- exactly the prefactor of the classical entropy
	skeleton $b(N\rho;N,p)\approx e^{-N D(\rho\|p)}\bigl(2\pi N\rho(1-\rho)\bigr)^{-1/2}$, where
	$D$ is the Kullback--Leibler divergence.  Moreover
	\[
		\Ah_1(t;p)=\frac1{12}-\frac{\Bh_2(t)}{2pq}
		 =\frac{1}{12}\Bigl(1-\frac1{pq}\Bigr)-\frac{t^{2}}{2pq},
	\]
	so
	\[
		\Ah_1(0;p)=\frac1{12}\Bigl(1-\frac1{pq}\Bigr)
		 =\frac1{12}\Bigl(1-\frac1p-\frac1q\Bigr),
	\]
	the last equality because $p+q=1$ gives $\tfrac1p+\tfrac1q=\tfrac{p+q}{pq}=\tfrac1{pq}$.  In
	the second form this is precisely the $1/(12N)$ Stirling correction of
	$\log N!-\log(Np)!-\log(Nq)!$, term for term.
\end{remark}

For reference, the symmetric coefficients:
\begin{align}
	\Ah_1(t;p)&=\frac1{12}-\frac{\Bh_2(t)}{2pq},
	&
	\Ah_2(t;p)&=\frac{(q-p)\,\Bh_3(t)}{6\,p^{2}q^{2}},
	\label{eq:Ah12}\\[2pt]
	\Ah_3(t;p)&=-\frac1{360}-\frac{(1-3pq)\,\Bh_4(t)}{12\,p^{3}q^{3}},
	&
	\Ah_4(t;p)&=-\frac{(p-q)(1-2pq)\,\Bh_5(t)}{20\,p^{4}q^{4}},
	\label{eq:Ah34}
\end{align}
the same shape as \eqref{eq:Atlist} with $\Bh_{k+1}$ in place of $B_{k+1}$.

\section{Splitting the displacement: Euler--Maclaurin}\label{sec:split}

We now let $t=h+r$ with $r\in\Z$, and ask how $r$ enters.  Introduce the two power sums along
the arithmetic progression $h,h+1,h+2,\dots$ --- the \emph{left endpoint} sum and the
\emph{trapezoidal} sum:
\begin{equation}\label{eq:ST}
	S_k(h,r):=\sum_{j=0}^{r-1}(h+j)^{k},
	\qquad
	T_k(h,r):=\tfrac12h^{k}+\sum_{j=1}^{r-1}(h+j)^{k}+\tfrac12(h+r)^{k}
	\qquad(r\ge1),
\end{equation}
with $S_k(h,0)=T_k(h,0)=0$, and for $r\le-1$
\[
	S_k(h,r):=-\sum_{j=1}^{-r}(h-j)^{k},
	\qquad
	T_k(h,r):=-\Bigl[\tfrac12(h+r)^{k}+\sum_{j=1}^{-r-1}(h-j)^{k}+\tfrac12h^{k}\Bigr].
\]
In all cases $T_k=S_k+\tfrac12\bigl[(h+r)^{k}-h^{k}\bigr]$.

\begin{lemma}\label{lem:faulhaber}
	For every $k\ge1$, every $h$ and every $r\in\Z$,
	\begin{equation}\label{eq:faulhaber}
		S_k(h,r)=\frac{B_{k+1}(h+r)-B_{k+1}(h)}{k+1},
		\qquad
		T_k(h,r)=\frac{\Bh_{k+1}(h+r)-\Bh_{k+1}(h)}{k+1} .
	\end{equation}
\end{lemma}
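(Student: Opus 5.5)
The plan is to derive both formulas in \eqref{eq:faulhaber} from the difference identity \eqref{eq:appell}, telescoping along the progression $h,h+1,\dots$. The second formula will then follow from the first through the relation $T_k=S_k+\tfrac12[(h+r)^k-h^k]$ and \eqref{eq:Bhat-alt}.

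First, the left-endpoint sum. For $r\ge1$ we apply \eqref{eq:appell} with $n=k+1$ at the points $t=h+j$, which gives $(h+j)^k=\bigl[B_{k+1}(h+j+1)-B_{k+1}(h+j)\bigr]/(k+1)$. Summing over $j=0,\dots,r-1$ telescopes to $\bigl[B_{k+1}(h+r)-B_{k+1}(h)\bigr]/(k+1)$. For $r=0$ both sides vanish. For $r\le-1$ we use the same identity at $t=h-j$, $j=1,\dots,-r$, which gives $(h-j)^k=\bigl[B_{k+1}(h-j+1)-B_{k+1}(h-j)\bigr]/(k+1)$. This sum telescopes to $\bigl[B_{k+1}(h)-B_{k+1}(h+r)\bigr]/(k+1)$, and the minus sign built into the definition of $S_k$ for negative $r$ gives exactly the claimed formula. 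The only care needed is to match that sign convention; no other case analysis is required.

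Second, the trapezoidal sum. By \eqref{eq:Bhat-alt} with $n=k+1\ge2$ we have $\Bh_{k+1}(x)=B_{k+1}(x)+\tfrac{k+1}{2}x^k$. Hence
\[
\frac{\Bh_{k+1}(h+r)-\Bh_{k+1}(h)}{k+1}=\frac{B_{k+1}(h+r)-B_{k+1}(h)}{k+1}+\tfrac12\bigl[(h+r)^k-h^k\bigr]=S_k+\tfrac12\bigl[(h+r)^k-h^k\bigr].
\]
It then suffices to verify the stated relation $T_k=S_k+\tfrac12[(h+r)^k-h^k]$ directly from the definitions \eqref{eq:ST} in the three cases $r\ge1$, $r=0$ and $r\le-1$. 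This is a bookkeeping check of endpoint weights. For instance, when $r\le-1$, $-S_k$ contains $h-1,\dots,h+r$ with full weight, and $-T_k$ halves the $h+r$ term and adds $\tfrac12h^k$.

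The main (mild) obstacle is the sign and endpoint bookkeeping for $r\le-1$. Everything else is telescoping.
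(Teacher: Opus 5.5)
Your proposal is correct and follows the paper's proof essentially step for step. You telescope the difference identity for $r\ge1$, treat $r=0$ as trivial, and telescope in the opposite direction for $r\le-1$, matching the sign in the definition of $S_k$. You then obtain the $T_k$ formula from $\Bh_{k+1}=B_{k+1}+\tfrac{k+1}{2}t^{k}$ and $T_k=S_k+\tfrac12\bigl[(h+r)^k-h^k\bigr]$; your endpoint-weight check of that last relation is a small addition the paper simply asserts.
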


\begin{proof}
	For $r\ge1$, summing
	$B_{k+1}(h+j+1)-B_{k+1}(h+j)=(k+1)(h+j)^k$ over $0\le j\le r-1$ gives the first formula.
	For $r=0$ both sides vanish.  For $r\le-1$, the same telescoping sum is taken in the opposite
	direction:
	\[
		B_{k+1}(h+r)-B_{k+1}(h)
		=-\sum_{j=1}^{-r}\bigl[B_{k+1}(h-j+1)-B_{k+1}(h-j)\bigr],
	\]
	which is exactly the signed definition of $S_k(h,r)$.  The second formula follows from the
	first and $\Bh_{k+1}=B_{k+1}+\tfrac{k+1}2t^{k}$:
	\[
		\frac{\Bh_{k+1}(h+r)-\Bh_{k+1}(h)}{k+1}
		=S_k(h,r)+\tfrac12\bigl[(h+r)^{k}-h^{k}\bigr]=T_k(h,r).
	\]
\end{proof}

\begin{theorem}[The contribution of $r$]\label{thm:split}
	For every $k\ge1$, every $h$ and every $r\in\Z$,
	\begin{equation}\label{eq:split}
	\begin{aligned}
		\At_k(h+r;p)&=\At_k(h;p)-\frac{\Xi_k(p)}{k}\;S_k(h,r),
		\\
		\Ah_k(h+r;p)&=\Ah_k(h;p)-\frac{\Xi_k(p)}{k}\;T_k(h,r).
	\end{aligned}
	\end{equation}
	Consequently $\At_k(h+r;p)$ and $\Ah_k(h+r;p)$ are polynomials in $r$ of degree at most
	$k+1$, with leading coefficient $-\Xi_k(p)/\bigl(k(k+1)\bigr)$.  The degree is exactly
	$k+1$ unless $\Xi_k(p)=0$ (which occurs precisely for even $k$ and $p=q=\tfrac12$).
\end{theorem}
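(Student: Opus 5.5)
The plan is to subtract the two coefficient formulas and apply Lemma~\ref{lem:faulhaber}. In the definitions of $\At_k$ (Theorem~\ref{thm:pure}) and $\Ah_k$ (Theorem~\ref{thm:sym}), the Bernoulli-number term $(-1)^{k+1}B_{k+1}/(k(k+1))$ does not depend on $t$. It therefore cancels in $\At_k(h+r;p)-\At_k(h;p)$, which leaves
\[
	\At_k(h+r;p)-\At_k(h;p)=-\frac{\Xi_k(p)}{k}\cdot\frac{B_{k+1}(h+r)-B_{k+1}(h)}{k+1}.
\]
By the first formula of \eqref{eq:faulhaber}, the last fraction is exactly $S_k(h,r)$. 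The same subtraction for $\Ah_k$, using the second formula of \eqref{eq:faulhaber}, gives $T_k(h,r)$. Lemma~\ref{lem:faulhaber} already handles $r\ge1$, $r=0$ and $r\le-1$, so this proves \eqref{eq:split} for every $r\in\Z$ with no further case split.

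For the polynomial statement, I would not use the sum definitions of $S_k$ and $T_k$ directly, since they only make sense for integer $r$. Instead I would work with the closed forms, regarded as functions of a variable $r$. The polynomial $B_{k+1}(h+r)$ is monic of degree $k+1$ in $r$, and so is $\Bh_{k+1}(h+r)=B_{k+1}(h+r)+\tfrac{k+1}{2}(h+r)^k$. The terms $B_{k+1}(h)$ and $\Bh_{k+1}(h)$ do not depend on $r$.

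It follows that $S_k(h,r)$ and $T_k(h,r)$ are polynomials in $r$ of degree exactly $k+1$, each with leading coefficient $1/(k+1)$. Multiplying by $-\Xi_k(p)/k$ and adding the $r$-independent term $\At_k(h;p)$ or $\Ah_k(h;p)$ gives degree at most $k+1$ and leading coefficient $-\Xi_k(p)/(k(k+1))$. The degree is exactly $k+1$ unless $\Xi_k(p)=0$.

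It remains to identify when $\Xi_k(p)=0$. By \eqref{eq:Xidef}, $\Xi_k(p)=\bigl(p^k+(-1)^{k+1}q^k\bigr)/(p^kq^k)$. For odd $k$ the numerator is $p^k+q^k>0$, so $\Xi_k(p)\neq0$. For even $k$ the numerator is $p^k-q^k$, which vanishes iff $p=q=\tfrac12$, because $x\mapsto x^k$ is injective on $(0,\infty)$.

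No step is a real obstacle. The only point needing care is that "polynomial in $r$" must be read through the closed form \eqref{eq:faulhaber} rather than through the sum definitions. The closed form extends to all real $r$ and agrees with $S_k$ and $T_k$ on $\Z$ by Lemma~\ref{lem:faulhaber}.
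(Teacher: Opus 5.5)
Your proposal is correct and matches the paper's proof: the $t$-independent term $(-1)^{k+1}B_{k+1}/(k(k+1))$ cancels, Lemma~\ref{lem:faulhaber} turns the bracket $B_{k+1}(h+r)-B_{k+1}(h)$ (resp.\ $\Bh_{k+1}(h+r)-\Bh_{k+1}(h)$) into $(k+1)S_k$ (resp.\ $(k+1)T_k$), and the degree claim follows because both closed forms are monic of degree $k+1$ in $r$. Your explicit verification that $\Xi_k(p)=0$ forces $k$ even and $p=q=\tfrac12$, and your remark that polynomiality in $r$ should be read through the closed form, only spell out steps the paper states in one line.
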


\begin{proof}
	We prove the first identity; the second is identical with $B$ replaced by $\Bh$ and $S$ by
	$T$.  From the definition of $\At_k$,
	\[
		\At_k(h+r;p)-\At_k(h;p)
		=-\frac{\Xi_k(p)}{k(k+1)}
		\bigl[B_{k+1}(h+r)-B_{k+1}(h)\bigr].
	\]
	By Lemma~\ref{lem:faulhaber}, the bracket is $(k+1)S_k(h,r)$; this gives the first formula in
	\eqref{eq:split}.  Repeating the same calculation with
	$\Ah_k=-\Xi_k(p)\Bh_{k+1}/(k(k+1))$ plus the unchanged Bernoulli-number term gives the
	trapezoidal formula.

	It remains only to record the degree.  The polynomials $B_{k+1}(x)$ and $\Bh_{k+1}(x)$ are
	monic of degree $k+1$, hence
	$B_{k+1}(h+r)-B_{k+1}(h)$ and $\Bh_{k+1}(h+r)-\Bh_{k+1}(h)$ have leading term $r^{k+1}$ as
	polynomials in $r$.  Multiplication by $-\Xi_k(p)/(k(k+1))$ gives the stated leading
	coefficient.  The degree drops precisely when this coefficient vanishes, i.e. when
	$\Xi_k(p)=0$; from \eqref{eq:Xidef} this is equivalent to $k$ even and $p=q=\tfrac12$.
\end{proof}

Thus the integer shift introduces no additional special-function structure.  The Bernoulli
content of the local expansion is carried by the fractional part $h_N$; the shift $r$ enters
through a power sum along $h,h+1,\dots,h+r$.  The normalisation determines the summation rule:
\[
	\text{Bernoulli }B_{k+1}\ \longleftrightarrow\ \text{left endpoint rule } S_k,
	\qquad
	\Bh_{k+1}\ \longleftrightarrow\ \text{trapezoidal rule } T_k .
\]

The same degree count controls the multiplicative coefficients.  In either normalisation, let
$C_n$ denote the exponential (Bell) polynomial in the first $n$ logarithmic coefficients, for
example
\[
	\exp\Bigl(\sum_{k\ge1}\At_k N^{-k}\Bigr)=\sum_{n\ge0}C_nN^{-n}.
\]
Since $\Xi_1(p)=(pq)^{-1}$, the first logarithmic coefficient is genuinely quadratic in $r$.
Thus the monomial $\At_1^{\,n}/n!$ (respectively $\Ah_1^{\,n}/n!$) contributes a nonzero
$r^{2n}$ term to $C_n$.  On the other hand, any product
$\At_{k_1}\cdots\At_{k_j}$ with $k_1+\cdots+k_j=n$ has degree at most
\[
	(k_1+1)+\cdots+(k_j+1)=n+j\le 2n,
\]
with equality only for $k_1=\cdots=k_j=1$.  Hence $C_n$ has degree exactly $2n$ in $r$.  Finally,
the expansion of the exact prefactor
\[
	\frac{Np}{Np+h+r}=\sum_{i\ge0}(-1)^i\Bigl(\frac{h+r}{Np}\Bigr)^i
\]
contributes, at order $N^{-i}$, only degree $i$ in $r$; at total order $N^{-n}$ this is always
below the top degree $2n$ for $n\ge1$.  The prefactor therefore cannot change the leading
$r$-degree.

\begin{remark}[Why the correspondence is forced]
	An Appell sequence with generating factor $A(z)$ inverts a difference operator, and the
	quadrature rule it produces is read off $A$.  For $A(z)=z/(e^{z}-1)$ the operator is the
	forward difference $E-1$, whose inverse is the left endpoint (rectangle) sum; for
	$A(z)=\tfrac z2\coth\tfrac z2=\tfrac z2\cdot\dfrac{E+1}{E-1}$ the operator is
	$(E-1)\big/\tfrac{E+1}2$, whose inverse is the trapezoidal sum.  Thus the two summation rules
	are the Euler--Maclaurin rules associated with the two Appell sequences.  Furthermore the parity
	of $A$ determines the $p\leftrightarrow q$ symmetry through \eqref{eq:Xidef} and
	\eqref{eq:Bhat-parity}.
\end{remark}

\begin{remark}[The $B_1$-term]\label{rem:B1}
	Expanding $B_{k+1}(h+r)=\sum_{i}\binom{k+1}{i}B_i(h)r^{k+1-i}$, the coefficient of $B_1(h)$ in
	$\At_k(h+r;p)$ is $-\Xi_k(p)\,r^{k}/k$.  For a \emph{fixed} $k$ this vanishes exactly when
	$r=0$ or $\Xi_k(p)=0$, the latter occurring precisely for even $k$ at $p=q=\tfrac12$ (when
	$\At_{2j}$ vanishes identically anyway).  Since $\Xi_1(p)=(pq)^{-1}$ is never zero, the full
	expansion contains a $B_1(h)$-term precisely when $r\ne0$.

	This contrasts with \cite[\S6]{paper22}, where no $B_1$-term appears.  There $h=0$ and
	$h\to1^-$ represent the same lattice situation in De~Moivre's formula, so the endpoint values
	of each coefficient must agree.  Here they represent the two different lattice points
	$Np+r$ and $Np+1+r$, whose probabilities need not agree; the cancellation survives only at
	the distinguished point $r=0$.
\end{remark}

\section{The complete expansion}\label{sec:main}

\begin{theorem}[Main]\label{thm:main}
	Fix $R\in\N$.  As $N\to\infty$, uniformly for $p$ in a compact subset of $(0,1)$ and for
	integers $r$ with $|r|\le R$, with $\nu=\lceil Np\rceil$ and $h=h_N=\nu-Np$,
	\begin{equation}\label{eq:main}
		\Pr\{X=\nu+r\}
		\sim\frac{Np}{Np+h+r}\cdot\frac{1}{\sqrt{2\pi Npq}}\;
		\exp\left(\sum_{k\ge1}\frac{1}{N^{k}}
		\left[\At_k(h;p)-\frac{\Xi_k(p)}{k}\,S_k(h,r)\right]\right),
	\end{equation}
	and equivalently
	\begin{equation}\label{eq:main-sym}
		\Pr\{X=\nu+r\}
		\sim\frac{1}{\sqrt{2\pi\,(\nu+r)(N-\nu-r)/N}}\;
		\exp\left(\sum_{k\ge1}\frac{1}{N^{k}}
		\left[\Ah_k(h;p)-\frac{\Xi_k(p)}{k}\,T_k(h,r)\right]\right).
	\end{equation}
	Equivalently, after either exponential is re-expanded multiplicatively and truncated after
	the term $N^{-M}$, the relative remainder is $O(N^{-M-1})$, uniformly in $|r|\le R$ and in
	$p$ on the chosen compact subset of $(0,1)$.
\end{theorem}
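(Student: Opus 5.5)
The plan is to obtain Theorem~\ref{thm:main} by specialising Theorems~\ref{thm:pure} and~\ref{thm:sym} to the oscillating displacement $t=h_N+r$ and then substituting the coefficient splitting of Theorem~\ref{thm:split}. First fix $R$ and the compact $K\subset(0,1)$, and take $H=[-R,R+1]$. Since $h_N\in[0,1)$ and $|r|\le R$, the displacement $t=h_N+r$ lies in $H$ for every $N$. Moreover $m=\nu+r$ is an integer, and for $N$ large it lies in $\{0,\dots,N\}$ by Lemma~\ref{lem:prefactors}, because $\delta N/2\le m\le N-\delta N/2$. So each pair $(p,t)$ arising here is admissible in Theorem~\ref{thm:old}. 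The point I will stress is that Theorems~\ref{thm:pure} and~\ref{thm:sym} hold \emph{uniformly} over admissible $(p,t)\in K\times H$. They may therefore be applied pointwise at the $N$-dependent value $t=h_N+r$ without any convergence of $h_N$. The implied constants depend only on $M$, $K$ and $H$, and hence only on $M$, $K$ and $R$.

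Next I rewrite the coefficients. With $t=h+r$, Theorem~\ref{thm:split} gives the exact polynomial identities
\[
\At_k(h+r;p)=\At_k(h;p)-\tfrac{\Xi_k(p)}{k}S_k(h,r)
\quad\text{and}\quad
\Ah_k(h+r;p)=\Ah_k(h;p)-\tfrac{\Xi_k(p)}{k}T_k(h,r).
\]
Substituting these into the truncated exponents of \eqref{eq:pure} and \eqref{eq:sym} changes nothing numerically. The prefactors also match: $Np/m=Np/(Np+h+r)$, and $m(N-m)/N=(\nu+r)(N-\nu-r)/N$. This yields \eqref{eq:main} and \eqref{eq:main-sym}, each in the precise sense that truncating the exponent after $M$ terms leaves a relative error $O(N^{-M-1})$, uniformly in $p\in K$ and $|r|\le R$. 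The word ``equivalently'' between the two forms is simply the content of Theorem~\ref{thm:sym}: its proof converts one prefactor into the other with the same truncation control, via \eqref{eq:asym-sum} and Lemma~\ref{lem:prefactors}.

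For the final sentence I pass from the truncated exponential to the multiplicatively re-expanded form $\sum_{n\le M}C_nN^{-n}$. The coefficients $\At_k(h;p)$ and $\Ah_k(h;p)$ are polynomials in $h\in[0,1]$ and $r\in[-R,R]$, with coefficients continuous in $p\in K$. Hence $|\At_k|,|\Ah_k|\le c_k$ uniformly, and so the exponent $E_M=\sum_{k\le M}a_kN^{-k}$ is $O(N^{-1})$ uniformly. Taylor's theorem with remainder for $\exp$ then gives $e^{E_M}=\sum_{n\le M}C_nN^{-n}+O(N^{-M-1})$ with a uniform constant, since the $C_n$ are Bell polynomials in bounded quantities. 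The leading term $e^{E_M}$ is bounded below, so the absolute error is also a relative one.

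The only genuine obstacle is the uniformity bookkeeping: ensuring that no constant depends on $h_N$ or on the particular $N$. This is why everything is routed through the compact $H$ and the uniform statements already proved. I would check carefully that admissibility holds for all large $N$, with a threshold depending only on $K$ and $R$. I would also note that if one wishes to expand the prefactor $Np/(Np+h+r)$ in powers of $N^{-1}$ as well, this is again a convergent series controlled by Lemma~\ref{lem:prefactors}.
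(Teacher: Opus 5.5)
Your proposal is correct and follows the paper's proof essentially step for step. Like the paper, you place $t=h_N+r$ in the compact set $H=[-R,R+1]$, invoke the uniformity of Theorems~\ref{thm:pure} and~\ref{thm:sym} (with Lemma~\ref{lem:prefactors} giving admissibility for large $N$), and then rewrite the coefficients by Theorem~\ref{thm:split}. Your explicit Taylor-remainder argument for the multiplicatively re-expanded form is a detail the paper leaves implicit, and it is sound.
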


\begin{proof}
	Let $K$ be the chosen compact subset of $(0,1)$.  Since $0\le h<1$ and $|r|\le R$, the
	displacement $t=h+r$ belongs to the fixed compact interval $[-R,R+1]$.  Thus
	Theorem~\ref{thm:old}, and hence Theorems~\ref{thm:pure} and~\ref{thm:sym}, apply uniformly
	for $p\in K$ and all allowed $r$.  Also, if
	$\delta=\min_{p\in K}\min(p,q)$, then for $N>2(R+1)/\delta$ the points
	$\nu+r=Np+h+r$ lie in $\{0,1,\dots,N\}$ whenever the probability is nontrivial, so no boundary
	term is involved.

	Substituting $t=h+r$ into Theorem~\ref{thm:pure} gives the size-biased form with coefficients
	$\At_k(h+r;p)$.  The first identity in Theorem~\ref{thm:split} rewrites those coefficients as
	$\At_k(h;p)-\Xi_k(p)S_k(h,r)/k$, which is \eqref{eq:main}.  The same argument, using
	Theorem~\ref{thm:sym} and the second identity in Theorem~\ref{thm:split}, gives
	\eqref{eq:main-sym}.  Since all earlier estimates were uniform on the compact set
	$K\times[-R,R+1]$, the remainder remains uniform in $p$ and $|r|\le R$.
\end{proof}

\begin{remark}[what is, and what is not, being claimed]\label{rem:whatisnew}
	Theorem~\ref{thm:main} is not presented as a new derivation of a local expansion.  The existence of an
	expansion at $t=h+r$ follows at once from \cite[Theorem 3.1]{paper22} by substituting the
	bounded displacement $t=h+r$ --- this is the role of the uniformity in $t$, as noted in
	\S\ref{sec:tail}.  What Theorem~\ref{thm:main} adds is the \emph{structure}: the exact
	identification of the elementary tail as a size-bias factor (Proposition~\ref{prop:tail},
	Theorem~\ref{thm:pure}), the two Appell normalisations and the obstruction between them
	(Proposition~\ref{prop:asym}, Theorem~\ref{thm:sym}), and the reduction of the entire
	$r$-dependence to a quadrature sum (Theorem~\ref{thm:split}).  The present paper is a
	companion to \cite{paper22} and depends on it; Theorem~\ref{thm:old} is quoted, not reproved.
\end{remark}

In multiplicative form, with $\At_k=\At_k(h+r;p)$,
\begin{equation}\label{eq:mult}
	\Pr\{X=\nu+r\}\sim\frac{Np}{\nu+r}\cdot\frac{1}{\sqrt{2\pi Npq}}
	\left[1+\frac{\At_1}{N}+\frac{\At_2+\frac12\At_1^{2}}{N^{2}}
	+\frac{\At_3+\At_1\At_2+\frac16\At_1^{3}}{N^{3}}+\cdots\right],
\end{equation}
the bracket being the exponential (Bell) series and the prefactor $Np/(\nu+r)$ being kept exact.
For comparison with the classical literature it is more convenient to expand the prefactor as
well and to normalise by $(2\pi Npq)^{-1/2}$ alone; doing so, and recalling
$A_1=\At_1-t/p$ from \eqref{eq:Ak}, one obtains to two orders, with $t=h+r$,
\begin{equation}\label{eq:twoorders}
	\Pr\{X=\nu+r\}=\frac{1}{\sqrt{2\pi Npq}}\left[
	1+\frac{1}{N}\left(\frac1{12}-\frac{B_2(t)}{2pq}-\frac tp\right)+O(N^{-2})\right],
\end{equation}
in which the term $-t/p$ is precisely the first order of the prefactor.  For computation
\eqref{eq:main} is the better form; for comparison, \eqref{eq:twoorders}.

\begin{remark}[Consistency with the local Edgeworth expansion]\label{rem:edgeworth}
	Write $\sigma^{2}=Npq$ and let $\mathit{He}_m$ be the \emph{probabilists'} Hermite
	polynomials, normalised so that $\mathit{He}_4(0)=3$ and $\mathit{He}_6(0)=-15$.  Expanding
	\eqref{eq:twoorders},
	\[
		\sqrt{2\pi Npq}\;\Pr\{X=Np+t\}
		=\exp\Bigl(-\frac{t^{2}}{2\sigma^{2}}\Bigr)
		\left[1-\frac{(q-p)\,t}{2\sigma^{2}}
		+\frac{1}{12N}\Bigl(1-\frac1{pq}\Bigr)+O(N^{-2})\right].
	\]
	This is the local Edgeworth expansion
	$\varphi(x)\bigl[1+\tfrac{\gamma_1}{6}\mathit{He}_3(x)+\tfrac{\gamma_2}{24}\mathit{He}_4(x)
	+\tfrac{\gamma_1^{2}}{72}\mathit{He}_6(x)+\cdots\bigr]$, with $x=t/\sigma$,
	$\gamma_1=(q-p)/\sigma$ and $\gamma_2=(1-6pq)/\sigma^{2}$, evaluated at $x=O(N^{-1/2})$:
	the $\mathit{He}_3$-term supplies $-(q-p)t/(2\sigma^{2})$, and the $\mathit{He}_4$- and
	$\mathit{He}_6$-terms supply, through the values at $0$ recorded above,
	\[
		\frac{3(1-6pq)}{24\,\sigma^{2}}-\frac{15(q-p)^{2}}{72\,\sigma^{2}}
		=\frac{pq-1}{12\,Npq},
	\]
	as required.  The two expansions are the same object organised by different variables:
	Edgeworth by $x=t/\sigma$, which is right when $t$ grows with $N$; ours by $t$ itself, which
	is right when $t$ is bounded.  In the present problem ours is the finer one, being a genuine
	expansion in $N^{-1}$ with the oscillating $h_N$ retained exactly, whereas the Edgeworth
	series mixes orders as soon as $h_N$ oscillates.

	Fractional-part terms in lattice expansions are, of course, classical: Esseen's complete
	lattice Edgeworth expansion already contains Bernoulli polynomials of fractional parts
	\cite{esseen}; see also Petrov~\cite[Ch.~VII]{petrov} and Janson~\cite[Rem.~1.4]{janson}.
	The distinction here is that the surviving arithmetic is the fractional part of the mean,
	$h_N=\lceil Np\rceil-Np$, not that of a standardized evaluation point.  Re-expanding the
	lattice Edgeworth series at bounded $t$ recovers such dependence, but not the closed
	all-order form in which the coefficients are $\Xi_k(p)$ times $B_{k+1}(h+r)$.
\end{remark}

\begin{remark}[Range of validity]
	Theorem~\ref{thm:main} requires $r$ bounded.  Since $\At_k(h+r;p)$ has degree $k+1$ in $r$,
	the term $\At_k/N^{k}$ is of size $r^{k+1}/N^{k}$: successive terms decrease only while
	$r=o(N)$, and the first correction is $o(1)$ only while $r=o(\sqrt N)$.  Beyond $r\asymp\sqrt
	N$ one is no longer at a fixed distance from the mode, the natural variable becomes
	$x=t/\sqrt{Npq}$, and the correct organisation is the Edgeworth series, then the Cram\'er
	series, then the saddle point.
\end{remark}

\begin{corollary}[A window around $\nu$]\label{cor:window}
	Fix $R\in\N$.  With $A_1(t;p)=\tfrac1{12}-\dfrac{B_2(t)}{2pq}-\dfrac tp$,
	\[
		\Pr\{|X-\nu|\le R\}
		=\frac{1}{\sqrt{2\pi Npq}}\left[(2R+1)
		+\frac1N\sum_{r=-R}^{R}A_1(h+r;p)+O(N^{-2})\right],
	\]
	and
	\begin{equation}\label{eq:windowsum}
		\sum_{r=-R}^{R}A_1(h+r;p)
		=(2R+1)\left[\frac1{12}-\frac{B_2(h)}{2pq}-\frac hp\right]
		-\frac{R(R+1)(2R+1)}{6\,pq} .
	\end{equation}
\end{corollary}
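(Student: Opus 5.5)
The plan has two parts: first sum the two-term expansion over the window, then evaluate the sum of first-order coefficients in closed form.

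For the first part, I apply \eqref{eq:twoorders} to each of the finitely many lattice points $\nu+r$, $|r|\le R$. Theorem~\ref{thm:main} guarantees that the relative remainder $O(N^{-2})$ is uniform in $|r|\le R$ and in $p$ on compacta. Since $\{|X-\nu|\le R\}$ is the disjoint union of the events $\{X=\nu+r\}$, I add the $2R+1$ expansions. The leading terms contribute $(2R+1)$ times the common factor $(2\pi Npq)^{-1/2}$. The first-order terms contribute $N^{-1}\sum_r A_1(h+r;p)$, where $A_1=\At_1-t/p$ with $t=h+r$ is exactly the bracket in \eqref{eq:twoorders}. The $2R+1$ remainders, each $O(N^{-2})$ with a uniform constant, add to $O(N^{-2})$. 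This gives the first display.

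For \eqref{eq:windowsum}, I write $A_1(h+r;p)=A_1(h;p)+[A_1(h+r;p)-A_1(h;p)]$. By Theorem~\ref{thm:split} with $k=1$ and $\Xi_1=1/(pq)$, together with the elementary shift $-(h+r)/p=-h/p-r/p$, the difference is
\[
A_1(h+r;p)-A_1(h;p)=-\frac{S_1(h,r)}{pq}-\frac rp .
\]
Here $S_1(h,r)=rh+r(r-1)/2$, which holds for all integers $r$: it follows from Lemma~\ref{lem:faulhaber}, or directly from $B_2(h+r)-B_2(h)=2rh+r^2-r$.

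The main step is then the summation over the symmetric range $|r|\le R$. Odd powers of $r$ cancel, so $\sum_r r=0$ and $\sum_r rh=0$, and the only survivor is the quadratic term. This gives
\[
\sum_{r=-R}^{R}\frac{r^2-r}{2}=\frac{R(R+1)(2R+1)}{6},
\]
and hence a total correction of $-R(R+1)(2R+1)/(6pq)$. Adding $(2R+1)A_1(h;p)$ yields \eqref{eq:windowsum}.

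I expect no real obstacle. The only care needed is to check that the formula for $S_1(h,r)$ is valid for negative $r$ under the signed convention of \eqref{eq:ST}, and this is precisely what Lemma~\ref{lem:faulhaber} guarantees.
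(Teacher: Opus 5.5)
Your proposal is correct and follows the paper's proof: you sum \eqref{eq:twoorders} over the $2R+1$ lattice points using the uniform remainder from Theorem~\ref{thm:main}, then evaluate the coefficient sum with $\sum_{r=-R}^{R}r=0$ and $\sum_{r=-R}^{R}r^{2}=R(R+1)(2R+1)/3$. Routing the shift through Theorem~\ref{thm:split} and $S_1(h,r)=rh+r(r-1)/2$ only repackages the paper's direct identity $B_2(h+r)=B_2(h)+(2h-1)r+r^{2}$, which you also note.
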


\begin{proof}
	Sum \eqref{eq:twoorders} over $-R\le r\le R$.  There are $2R+1$ leading terms, and the
	relative remainder $O(N^{-2})$ remains uniform because the number of summands is fixed.  Since
	$B_2(x)=x^2-x+\tfrac16$, we have
	\[
		B_2(h+r)=B_2(h)+(2h-1)r+r^2.
	\]
	Using
	\[
		\sum_{r=-R}^{R}r=0,\qquad
		\sum_{r=-R}^{R}r^{2}=\frac{R(R+1)(2R+1)}{3},
	\]
	we obtain
	\[
		\sum_{r=-R}^{R}B_2(h+r)
		=(2R+1)B_2(h)+\frac{R(R+1)(2R+1)}{3},
	\]
	and also $\sum_{r=-R}^{R}(h+r)=(2R+1)h$.  Substituting these two sums into
	$\sum_r A_1(h+r;p)$ gives \eqref{eq:windowsum}.
\end{proof}

The correction has a dominant negative part of order $R^{3}$, reflecting the local Gaussian
curvature around the mode.

\section{The mode}\label{sec:mode}

\begin{proposition}\label{prop:mode}
	The upper mode of $\Bin(N,p)$ is
	\[
		\lfloor(N+1)p\rfloor=\nu+\lfloor p-h\rfloor
		=\begin{cases}\nu, & h\le p,\\[2pt] \nu-1, & h>p,\end{cases}
	\]
	that is, $r_{\mathrm{mode}}=0$ if $h\le p$ and $r_{\mathrm{mode}}=-1$ if $h>p$.
	When $h=p$ (equivalently $(N+1)p\in\Z$), the lower adjacent point $\nu-1$ is also a mode.
\end{proposition}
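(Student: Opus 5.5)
The plan is to use only the exact ratio of consecutive binomial masses; no asymptotics are needed. For $1\le m\le N$,
\[
	\frac{b(m;N,p)}{b(m-1;N,p)}=\frac{(N-m+1)\,p}{m\,q},
\]
and this is $\ge1$ if and only if $(N-m+1)p\ge mq$, i.e.\ $(N+1)p\ge m$, with equality exactly when $m=(N+1)p$. Hence $b(\cdot\,;N,p)$ is nondecreasing up to $\lfloor(N+1)p\rfloor$ and strictly decreasing afterwards. Consequently $\lfloor(N+1)p\rfloor$ is the largest maximiser, which is the upper mode. If $(N+1)p\in\Z$, the ratio at $m=(N+1)p$ equals $1$, so $m-1$ is also a mode. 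Boundary cases are excluded for large $N$ since $p\in(0,1)$, but the monotonicity argument covers them anyway.

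Next I rewrite the floor in terms of $\nu$ and $h$. Since $Np=\nu-h$, we have $(N+1)p=\nu+(p-h)$ with $\nu\in\Z$, so $\lfloor(N+1)p\rfloor=\nu+\lfloor p-h\rfloor$. Now $h\in[0,1)$ and $p\in(0,1)$ give $p-h\in(-1,1)$. Therefore $\lfloor p-h\rfloor=0$ when $h\le p$ and $=-1$ when $h>p$, which yields the two cases for $r_{\mathrm{mode}}$.

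Finally, $h=p$ holds if and only if $(N+1)p=\nu\in\Z$. Conversely, $(N+1)p\in\Z$ forces $p-h\in\Z\cap(-1,1)=\{0\}$. In that case the mode is $\nu$ and the tie case above makes $\nu-1$ a mode too. The only step needing care is the bookkeeping at the tie $h=p$ and the convention $h\in[0,1)$ (so that $Np\in\Z$ gives $h=0\le p$). There is no real obstacle here.
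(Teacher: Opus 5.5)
Your proof is correct and follows the paper's argument essentially verbatim. It uses the same consecutive-mass ratio test (you index it as $b(m)/b(m-1)$, the paper as $b(k+1)/b(k)$), the same rewriting $\lfloor (N+1)p\rfloor=\nu+\lfloor p-h\rfloor$ with $p-h\in(-1,1)$, and the same treatment of the tie. Your explicit check of both directions of $h=p\iff(N+1)p\in\Z$ is a small addition the paper leaves implicit.
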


\begin{proof}
	For $0\le k\le N-1$,
	\[
		\frac{b(k+1)}{b(k)}
		=\frac{\binom N{k+1}p^{k+1}q^{N-k-1}}{\binom Nk p^kq^{N-k}}
		=\frac{(N-k)p}{(k+1)q}.
	\]
	This ratio is at least one precisely when
	\[
		(N-k)p\ge(k+1)q
		\quad\Longleftrightarrow\quad
		Np\ge kq+p k+q
		\quad\Longleftrightarrow\quad
		k+1\le (N+1)p,
	\]
	because $p+q=1$.  Hence the masses increase up to the last $k$ for which this inequality
	holds and decrease afterwards.  Therefore the largest maximising index is
	$\lfloor(N+1)p\rfloor$, and if $(N+1)p$ is an integer the two adjacent indices
	$(N+1)p-1$ and $(N+1)p$ have equal mass.  Since $Np=\nu-h$,
	\[
		\lfloor(N+1)p\rfloor=\lfloor Np+p\rfloor=\nu+\lfloor p-h\rfloor,
	\]
	with $p-h\in(-1,1)$.  Thus $\lfloor p-h\rfloor=0$ when $h\le p$ and
	$\lfloor p-h\rfloor=-1$ when $h>p$, giving the stated alternatives.  At equality
	$h=p$, one has $(N+1)p=\nu$, so the preceding ratio test gives equality between the two
	adjacent masses $b(\nu-1)$ and $b(\nu)$.
\end{proof}

\begin{remark}[The coefficient of $N^{-1}$ already knows the mode]\label{rem:mode-coeff}
	The exact rule is proved above, by the elementary ratio argument, and that argument is also
	what certifies that no index other than $\nu$ and $\nu-1$ can be maximal.  The following is
	therefore a \emph{consistency check} on the coefficients, not an independent proof.

	By \eqref{eq:Ah12} the exponent of \eqref{eq:main-sym}, to first order, is
	\[
		\frac{1}{N}\left[\frac1{12}\Bigl(1-\frac1{pq}\Bigr)-\frac{t^{2}}{2pq}\right]
		\quad\text{plus the expansion of the prefactor,}
	\]
	and collecting the linear term of the latter one finds the exponent
	$-\bigl(t^{2}-(p-q)t\bigr)/(2Npq)+\text{const}$: a parabola in $t$ whose vertex is at
	\[
		t_{*}=\frac{p-q}{2},
	\]
	\emph{not} at $t=0$.  Of the two adjacent candidates $\nu-1$ and $\nu$ --- that is,
	$t\in\{h-1,h\}$ --- the upper one is closer to $t_{*}$ precisely when $h-t_{*}\le\tfrac12$,
	that is, when
	\[
		h\le\frac{p-q}{2}+\frac12=p .
	\]
	The threshold delivered by the first asymptotic coefficient is therefore the \emph{exact}
	threshold for the upper mode in Proposition~\ref{prop:mode}; equality is exactly the
	two-mode case.

	Thus the mode is not, in general, the lattice point nearest the mean: the criterion is
	$h\le p$, not $h\le\tfrac12$.  The vertex $t_*=(p-q)/2$ is the usual skewness correction
	(already present in the $\mathit{He}_3$-term of Remark~\ref{rem:edgeworth}); the point here is
	that the first local coefficient gives the same correction together with the exact lattice
	rounding rule.  For related discussions of binomial modes and of the mode displacement in
	nearly normal laws see Kaas and Buhrman~\cite{kaas_buhrman}, Haldane~\cite{haldane}, and
	Hall~\cite{hall}.
\end{remark}

\begin{remark}[the symmetry orbit, and its endpoint]\label{rem:orbit}
	In terms of $(h,r)$ the exact symmetry \eqref{eq:sym-exact} reads
	\[
		(h,r,p)\longmapsto(1-h,\,-1-r,\,q)\qquad(0<h<1),
	\]
	so that the reflection centre in $r$ is $r=-\tfrac12$ and the mirror of $r=0$ is $r=-1$.

	At the endpoint $h=0$ the convention $h_N\in[0,1)$ replaces the reflected representative
	$h'=1$ by $h'=0$.  The symmetry is then
	\[
		(0,r,p)\longmapsto(0,\,-r,\,q),
	\]
	which designates the same reflected lattice point as the non-canonical choice
	$(1,-1-r,q)$.  Thus the pair $\{0,-1\}$ of Proposition~\ref{prop:mode} is one symmetry orbit
	for $0<h<1$; the ratio argument, however, is what proves that no other lattice point can be a
	mode.
\end{remark}

\section{Two consistency checks}\label{sec:checks}

\subsection{The exact ratio, and why the prefactor must be there}

For $r\ge1$, telescoping the exact ratio $b(k+1)/b(k)=\dfrac{(N-k)p}{(k+1)q}$ gives
\begin{equation}\label{eq:exactratio}
	\frac{b(\nu+r)}{b(\nu)}=\prod_{j=1}^{r}\frac{(N-\nu-j+1)\,p}{(\nu+j)\,q} .
\end{equation}
Theorem~\ref{thm:main} must reproduce this identically, and the way it does is instructive.  By
Theorem~\ref{thm:split}, and splitting $\Xi_k(p)/k$ into its two logarithmic series as in the
proof of Proposition~\ref{prop:asym},
\[
	\sum_{k\ge1}\frac{\At_k(h+r;p)-\At_k(h;p)}{N^{k}}
	=-\sum_{j=0}^{r-1}\left[\log\Bigl(1+\frac{h+j}{Np}\Bigr)
	   -\log\Bigl(1-\frac{h+j}{Nq}\Bigr)\right].
\]
Exponentiating, and multiplying by the ratio of the two prefactors,
$\bigl(Np/(\nu+r)\bigr)\big/\bigl(Np/\nu\bigr)=\nu/(\nu+r)$,
\[
	\frac{b(\nu+r)}{b(\nu)}
	=\frac{\nu}{\nu+r}\prod_{j=0}^{r-1}\frac{(Nq-h-j)\,p}{(Np+h+j)\,q}
	=\frac{\nu}{\nu+r}\prod_{j=0}^{r-1}\frac{(N-\nu-j)\,p}{(\nu+j)\,q}
	=\prod_{j=1}^{r}\frac{(N-\nu-j+1)\,p}{(\nu+j)\,q},
\]
the last step being the telescoping of the denominator: the product
$\prod_{j=0}^{r-1}(\nu+j)$ equals $\nu(\nu+1)\cdots(\nu+r-1)$, and multiplying by
$\nu/(\nu+r)$ cancels the leading factor $\nu$ while appending the trailing factor $\nu+r$,
\[
	\frac{\nu}{\nu+r}\cdot\frac{1}{\nu(\nu+1)\cdots(\nu+r-1)}
	=\frac{1}{(\nu+1)(\nu+2)\cdots(\nu+r)}
	=\prod_{j=1}^{r}\frac{1}{\nu+j}\,;
\]
the numerators are unchanged by the reindexing $j\mapsto j+1$.  So the series reproduces
\eqref{eq:exactratio} exactly --- and note that the telescoping \emph{requires} the factor
$Np/m$.  Without the size-bias prefactor the identity would not close.  This is an independent
confirmation that \eqref{eq:pure} is the right normalisation, and not merely a convenient one.

The case $r\le-1$ is not merely analogous; since sign conventions in the oriented sums
\eqref{eq:ST} are the easiest place to go wrong, we display it.  Write $r'=-r\ge1$.  Telescoping
$b(k-1)/b(k)=\dfrac{k\,q}{(N-k+1)\,p}$ downwards from $\nu$ gives
\begin{equation}\label{eq:exactratio-neg}
	\frac{b(\nu-r')}{b(\nu)}=\prod_{j=1}^{r'}\frac{(\nu-j+1)\,q}{(N-\nu+j)\,p} ,
\end{equation}
and the same computation as above, now with the \emph{negative} orientation
$S_k(h,-r')=-\sum_{j=1}^{r'}(h-j)^{k}$, reproduces \eqref{eq:exactratio-neg} identically: the
minus sign in \eqref{eq:ST} is exactly what interchanges the roles of $p$ and $q$ in the two
logarithmic series, and the prefactor ratio is now $\nu/(\nu-r')$.  This is the finite-$N$
meaning of the oriented sums.

\subsection{The central case: recovering the coefficients of
\texorpdfstring{\cite{be-binom}}{Buric-Elezovic}}

The expansion of \cite{be-binom} is written for $\binom{2x}{x-\rho}$, so that its shift $\rho$
counts \emph{downwards} from the centre.  In our notation, therefore, $\rho=-r$; we keep the
symbol $\rho$ in this subsection to avoid a clash, and note that the $P_n$ turn out to be even in
$\rho$, so the sign is in the end immaterial.

Take $N=2x$, $p=q=\tfrac12$, $m=x-\rho$.  Then $Np=x$, $\nu=x$, $h=0$, and the displacement is
$t=m-Np=-\rho$.  Theorem~\ref{thm:pure} gives
\[
	4^{-x}\binom{2x}{x-\rho}\sim\frac{x}{x-\rho}\cdot\frac{1}{\sqrt{\pi x}}\;
	\exp\Bigl(\sum_{k\ge1}\frac{\At_k(-\rho;\tfrac12)}{(2x)^{k}}\Bigr),
\]
so that $\binom{2x}{x-\rho}\sim\dfrac{4^{x}}{\sqrt{\pi x}}\sum_{n\ge0}P_n(\rho)\,x^{-n}$, the
$P_n$ being obtained by multiplying the exponential series by $(1-\rho/x)^{-1}$.  Carrying this
out (and writing $r$ for $\rho$ in the displayed polynomials, as in \cite{be-binom}):
\begin{align*}
	P_0&=1, &
	P_1&=-\frac18-r^{2}, \\[2pt]
	P_2&=\frac1{128}+\frac{5r^{2}}{8}+\frac{r^{4}}{2}, &
	P_3&=\frac5{1024}-\frac{91r^{2}}{384}-\frac{35r^{4}}{48}-\frac{r^{6}}{6},
\end{align*}
\[
	P_4=-\frac{21}{32768}+\frac{61r^{2}}{3072}+\frac{161r^{4}}{256}+\frac{7r^{6}}{16}
	     +\frac{r^{8}}{24},
\]
in exact agreement with \cite[Theorem 3.1]{be-binom}, except that the value of $P_1$ printed
there, $-\tfrac18+r^{2}$, is a misprint for $-\tfrac18-r^{2}$: the recursion stated in that same
theorem, and the accompanying values of $P_2,P_3,P_4$, both give the minus sign, as does the
present derivation.  The degrees are $\deg P_n=2n$, exactly
as Theorem~\ref{thm:split} predicts.

Two features of the central case are worth isolating, because they are precisely what is
\emph{lost} for a general $p$.
\begin{itemize}
	\item $h=0$: the argument of the Bernoulli polynomial is an integer, $B_{k+1}(h)$ collapses to
	a Bernoulli \emph{number}, and there is no oscillation.
	\item $p=q$: by \eqref{eq:Xilist} we have $\Xi_{2j}=0$, so \emph{every even logarithmic
	coefficient vanishes in both normalisations}, $\At_{2j}=\Ah_{2j}=0$, and the expansion runs
	over odd $k$ only.  The two normalisations nevertheless remain distinct: since
	$\Xi_{2j-1}(\tfrac12)=2^{2j}\ne0$, they differ in \emph{every} odd --- that is, in every
	surviving --- order, by
	\[
		\Ah_k(t;\tfrac12)-\At_k(t;\tfrac12)=-\frac{\Xi_k(\tfrac12)\,t^{k}}{2k}
		\qquad(k\ \text{odd}).
	\]
	What distinguishes them is parity in $t$.  At $p=\tfrac12$ the exact symmetry
	\eqref{eq:sym-exact} is simply $t\mapsto-t$, and by \eqref{eq:Bhat-parity} the coefficients
	$\Ah_k(\cdot\,;\tfrac12)$ are \emph{even} functions of $t$, hence manifestly invariant; the
	$\At_k$ are not, and the missing asymmetry is carried by the size-bias prefactor
	$x/(x-\rho)$.  (For instance $\At_1=-2t^{2}+2t-\tfrac14$ while $\Ah_1=-2t^{2}-\tfrac14$.)
\end{itemize}
For a general $p$ neither collapse is available, and the oscillating Bernoulli polynomial
$B_{k+1}(h_N)$ remains part of the coefficient structure.  In this precise sense the present theorem is the common
generalisation of \cite{be-binom} and \cite{paper22}.

\section{Averaging the oscillation}\label{sec:cesaro}

The coefficients do not converge.  It is natural to ask for their mean value, and the answer is
supplied by the same Bernoulli machinery --- indeed by the multiplication theorem
\begin{equation}\label{eq:mult-bernoulli}
	B_j(bt)=b^{\,j-1}\sum_{i=0}^{b-1}B_j\Bigl(t+\frac ib\Bigr),
\end{equation}
which in \cite[\S6]{paper22} served to \emph{remove} the Bernoulli polynomials in the
integer-parameter theory, and which here tells us exactly how much they contribute on average.

A word on which coefficients we average.  The pure and symmetric normalisations of
\S\ref{sec:three} carry exact, non-polynomial prefactors ($Np/m$, respectively
$(2\pi m(N-m)/N)^{-1/2}$), which themselves depend on $h$; averaging their multiplicative
coefficients is possible but the statement is cluttered.  We therefore average the coefficients of
the \emph{naive} normalisation \eqref{eq:old}, whose prefactor $(2\pi Npq)^{-1/2}$ is free of $h$,
so that the entire $h$-dependence sits in the coefficients.  This is a matter of convenience only:
the argument below is a statement about polynomials in $h$ expanded in the Bernoulli basis, and it
applies verbatim to the multiplicative coefficients of \eqref{eq:pure} or \eqref{eq:sym} once the
respective prefactor has also been expanded.

Normalise, then, as in \eqref{eq:old} and write
\begin{equation}\label{eq:cn}
	\sqrt{2\pi Npq}\;\Pr\{X=\nu+r\}\sim\sum_{n\ge0}\frac{c_n(h+r;p)}{N^{n}},
	\qquad c_0=1,
\end{equation}
the $c_n$ being the exponential polynomials in $A_1,\dots,A_n$ of \eqref{eq:Ak}.  Each $c_n$ is a
polynomial in $h$ of degree $2n$, hence has a unique expansion in the Bernoulli basis,
\begin{equation}\label{eq:cn-basis}
	c_n(h+r;p)=\sum_{j=0}^{2n}\gamma_{n,j}(r;p)\,B_j(h).
\end{equation}
We call $\gamma_{n,0}(r;p)$ the \emph{smooth part} and the rest the \emph{lattice part}.

\begin{proposition}[Ces\`aro means]\label{prop:cesaro}
	Let $0<p<1$ and $h_N=\lceil Np\rceil-Np$.
	\begin{itemize}
	\item[(i)] If $p$ is irrational, then $(h_N)_{N\ge1}$ is equidistributed in $[0,1)$ and, for
		every $n$ and every fixed $r\in\Z$,
		\[
			\lim_{M\to\infty}\frac1M\sum_{N=1}^{M}c_n(h_N+r;p)=\gamma_{n,0}(r;p)
			=\int_0^1 c_n(h+r;p)\,dh .
		\]
	\item[(ii)] If $p=a/b$ with $\gcd(a,b)=1$, then $h_N$ is periodic in $N$ with period $b$ and
		takes each of the values $0,\tfrac1b,\dots,\tfrac{b-1}b$ exactly once per period.  The
		mean of $c_n$ over a period is obtained from \eqref{eq:cn-basis} by the substitution
		\[
			B_j(h)\;\longmapsto\;b^{-j}B_j\qquad(j\ge0),
		\]
		where $B_j=B_j(0)$, so that in particular $B_1=-\tfrac12$.
	\end{itemize}
\end{proposition}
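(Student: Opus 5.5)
Since $c_n(h+r;p)$ is a polynomial in $h$ for fixed $n$, $r$, $p$, the Ces\`aro mean of $N\mapsto c_n(h_N+r;p)$ depends only on the asymptotic distribution of $h_N$. The plan is to compute that distribution in each case and then average the Bernoulli basis \eqref{eq:cn-basis} term by term. The coefficients $\gamma_{n,j}(r;p)$ do not depend on $N$, so linearity reduces everything to averaging $B_j(h_N)$ for each $j\le 2n$.

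\emph{Case (i).} Write $h_N=\lceil Np\rceil-Np=\{-Np\}$, using the fact that $\{x\}=\lceil -x\rceil+x$ when $x$ is not an integer; for irrational $p$, $Np\notin\Z$ for every $N\ge1$. Since $-p$ is irrational, Weyl's equidistribution theorem shows that $\{-Np\}$ is equidistributed in $[0,1)$. For a continuous, in particular polynomial, function $f$ on $[0,1]$, equidistribution gives $\frac1M\sum_{N\le M}f(h_N)\to\int_0^1f$. Applying this with $f=B_j$, we use the classical facts $\int_0^1B_j=0$ for $j\ge1$ and $\int_0^1 B_0=1$, which follow from $B_{j+1}'=(j+1)B_j$ together with $B_{j+1}(1)=B_{j+1}(0)$ for $j\ge1$ (the case $k=0$, $t=0$ of \eqref{eq:appell} shifted). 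This yields $\gamma_{n,0}(r;p)=\int_0^1c_n(h+r;p)\,dh$, as claimed.

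\emph{Case (ii).} Take $p=a/b$ with $\gcd(a,b)=1$. Then $h_N=\{-Na/b\}$ depends only on $N\bmod b$. As $N$ runs over a full residue system mod $b$, so does $-Na$, because $a$ is invertible mod $b$. Hence $h_N$ takes each value $i/b$, $0\le i\le b-1$, exactly once per period. The mean of $B_j(h_N)$ over a period is therefore $\frac1b\sum_{i=0}^{b-1}B_j(i/b)$. By the multiplication theorem \eqref{eq:mult-bernoulli} at $t=0$, this equals $\frac1b\,b^{1-j}B_j(0)=b^{-j}B_j$. Linearity in \eqref{eq:cn-basis} then gives the stated substitution. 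The case $j=1$ needs no separate treatment: \eqref{eq:mult-bernoulli} holds for all $j\ge0$ with $B_1=-\tfrac12$, which is exactly why the convention matters here.

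The only step that needs any care is the identification $h_N=\{-Np\}$, including the endpoint convention $h\in[0,1)$ when $b\mid N$, where $h_N=0$ is consistent. The rest is routine. I would also verify that the definition of $c_n$ via \eqref{eq:cn} makes it a genuine polynomial in $h$ with $N$-independent coefficients; this is immediate from \eqref{eq:Ak} and the Bell-polynomial construction, so the interchange of the finite basis sum with the limit is trivial.
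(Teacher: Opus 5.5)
Your proposal is correct and follows essentially the same route as the paper: Weyl equidistribution together with $\int_0^1 B_j=0$ for $j\ge1$ in case (i), and in case (ii) the count showing each residue class is hit once per period, followed by the multiplication theorem at $t=0$ and linearity. The only cosmetic difference is that you write $h_N=\{-Np\}$ and apply Weyl to $-p$ directly, whereas the paper reflects $\{Np\}$ under $u\mapsto 1-u$ and handles the endpoint separately; note that $\lceil -x\rceil+x=\{x\}$ holds for every real $x$, so your non-integer caveat is unnecessary.
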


\begin{proof}
	(i) By Weyl's theorem, the sequence $(\{Np\})_{N\ge1}$ is equidistributed in $[0,1)$ for
	irrational $p$.  The map $u\mapsto1-u$ preserves Lebesgue measure, and changing the single
	endpoint value that occurs when $u=0$ is irrelevant for an irrational rotation; hence
	$h_N=\lceil Np\rceil-Np$ is equidistributed as well.  Since $c_n(h+r;p)$ is a polynomial in
	$h$, it is continuous on $[0,1]$, so equidistribution gives
	\[
		\lim_{M\to\infty}\frac1M\sum_{N=1}^{M}c_n(h_N+r;p)
		=\int_0^1c_n(h+r;p)\,dh.
	\]
	Using the Bernoulli-basis expansion \eqref{eq:cn-basis} and
	$\int_0^1B_j(h)\,dh=0$ for every $j\ge1$, the integral is exactly the constant coefficient
	$\gamma_{n,0}(r;p)$.

	(ii) With $p=a/b$ and $\gcd(a,b)=1$, the residue $Na\bmod b$ runs over all residues modulo
	$b$ as $N$ runs over a period.  Since
	\[
		h_N=\lceil Na/b\rceil-Na/b=\frac{(-Na)\bmod b}{b},
	\]
	each of the values $0,1/b,\dots,(b-1)/b$ occurs exactly once per period.  Therefore the period
	mean of a Bernoulli-basis term is
	\[
		\frac1b\sum_{i=0}^{b-1}B_j(i/b).
	\]
	Applying \eqref{eq:mult-bernoulli} with $t=0$ gives
	$\sum_{i=0}^{b-1}B_j(i/b)=b^{1-j}B_j(0)$, and hence the mean is
	$b^{-j}B_j(0)=b^{-j}B_j$.  Linearity then gives the stated substitution in
	\eqref{eq:cn-basis}.
\end{proof}

\begin{example}\label{ex:c1}
	For $n=1$ we have $c_1=A_1(h+r;p)$, and its Bernoulli-basis expansion is
	\[
		\gamma_{1,0}=\frac1{12}-\frac{r^{2}}{2pq}-\frac{r+\tfrac12}{p},
		\qquad
		\gamma_{1,1}=-\frac{r+q}{pq},
		\qquad
		\gamma_{1,2}=-\frac{1}{2pq} .
	\]
	Hence for irrational $p$ the first correction oscillates about
	\[
		\frac{1}{N}\left[\frac1{12}-\frac{r^{2}}{2pq}-\frac{2r+1}{2p}\right],
	\]
	while for $p=a/b$ its mean over a period is
	\[
		\gamma_{1,0}-\frac{\gamma_{1,1}}{2b}+\frac{\gamma_{1,2}}{6b^{2}}
		=\frac1{12}-\frac{r^{2}}{2pq}-\frac{2r+1}{2p}+\frac{r+q}{2b\,pq}-\frac{1}{12\,b^{2}pq}.
	\]
	Note $\gamma_{1,1}\ne0$ for every integer $r$: the lattice part is genuinely present, in
	accordance with Remark~\ref{rem:B1}.
\end{example}

Proposition~\ref{prop:cesaro} closes the circle with Section~\ref{sec:checks}.  In the
integer-parameter theory of \cite{be-binom} the multiplication theorem makes the Bernoulli
polynomials disappear from the final formula.  In the present, genuinely off-lattice setting they
cannot disappear --- but the same theorem says exactly how much they contribute on average,
namely $b^{-j}B_j$ per period, and nothing at all in the irrational limit $b\to\infty$.  The
smooth part $\gamma_{n,0}$ is what a naive, non-lattice calculation would produce; the lattice
part is the arithmetic correction.

\begin{remark}[Two caveats, and a comparison]\label{rem:cesaro-caveat}
	The averaging above is legitimate because the expansion \eqref{eq:cn} is uniform in $h$: the
	remainder after $M$ terms is $O(N^{-M-1})$ with a constant independent of $h_N\in[0,1)$
	(this is the uniformity of \cite[Theorem 3.1]{paper22}), so averaging a fixed truncation over
	$N$ commutes with the error.  Two things are \emph{not} claimed.  The rate in part~(i) is not
	uniform in $p$: Weyl equidistribution gives no rate without a Diophantine hypothesis on $p$, so
	the convergence $\frac1M\sum_{N\le M}c_n(h_N+r;p)\to\gamma_{n,0}$ can be arbitrarily slow.  And
	the averaged quantity is a Ces\`aro mean over $N$, not an expansion valid for a single large
	$N$.

	This should be compared with the coverage calculations of Brown, Cai and
	DasGupta~\cite{bcd}, who meet the same oscillation in the binomial confidence-interval problem.
	Their variable $g(p,z)=\lceil Np+z\sqrt{Npq}\,\rceil-Np-z\sqrt{Npq}$ is our $h_N$ at $z=0$, and
	their oscillating coefficients are, in our notation, $-\bigl[B_1(g)+\dots\bigr]$ and
	$-\tfrac12\bigl[B_2(g)+\dots\bigr]$ --- Bernoulli polynomials of the same fractional part,
	though not named as such.  They too remove the oscillation by averaging, but they average over
	\emph{$p$}, against a smooth prior, and obtain only that the oscillatory contribution is of
	lower order (a ``very weak'' expansion in the sense of Woodroofe).  Here the averaging is over
	\emph{$N$}, and it yields not merely negligibility but the exact mean: zero for irrational $p$,
	and the definite constant $b^{-j}B_j$ per period for $p=a/b$.  That constant is the point.
\end{remark}

\section{Numerical verification}\label{sec:numerics}

All computations were carried out at $50$--$60$ significant digits.

\medskip\noindent
\textbf{(a) The rate.}  We first check the order of the remainder with $t$ held \emph{fixed},
using the gamma-defined mass
$\Gamma(N+1)\big/\bigl(\Gamma(Np+t+1)\Gamma(Nq-t+1)\bigr)\cdot p^{Np+t}q^{Nq-t}$, which requires
no integrality and so allows $t$ to be frozen while $N$ grows.  The entries are the ratios of
consecutive relative errors of \eqref{eq:pure} truncated after $M$ terms, as $N$ runs through
$200,800,3200,12800$; the prediction is $4^{M+1}$.

\begin{center}\small
\begin{tabular}{llcccc}
\hline
$p$ & $t$ & $M=1$ & $M=2$ & $M=3$ & $M=4$\\
\hline
$0.3$ & $0.4$  & $15.9,\,16.0,\,16.0$ & $64.2,\,64.1,\,64.0$ & $252.9,\,255.2,\,255.8$ & $1031,\,1026,\,1024$\\
$0.3$ & $3.4$  & $15.6,\,15.9,\,16.0$ & $62.8,\,63.7,\,63.9$ & $250.0,\,254.5,\,255.6$ & $1000,\,1018,\,1023$\\
$1/\sqrt7$ & $-2.6$ & $16.5,\,16.1,\,16.0$ & $64.8,\,64.2,\,64.1$ & $262.4,\,257.6,\,256.4$ & $1043,\,1029,\,1025$\\
\hline
\multicolumn{2}{l}{\emph{predicted}} & $16$ & $64$ & $256$ & $1024$\\
\hline
\end{tabular}
\end{center}

\noindent
The symmetric form \eqref{eq:sym} passes the same test with the same rates.  In addition,
$b(m;N,p)$ computed from \eqref{eq:sym} agrees with $b(N-m;N,q)$ computed from \eqref{eq:sym} to
$61$ digits, confirming that \eqref{eq:Ahsym} is an exact symmetry of the coefficients and not
merely an asymptotic one.

\medskip\noindent
\textbf{(b) At the true lattice points.}  Relative error of \eqref{eq:main} for
$\Pr\{X=\lceil Np\rceil+r\}$, with $p=1/\sqrt7$, so that $h_N$ genuinely oscillates:

\begin{center}\small
\begin{tabular}{rrcccc}
\hline
$r$ & $N$ & $M=1$ & $M=2$ & $M=3$ & $M=4$\\
\hline
$0$ & $200$  & $4.1\cdot10^{-7}$  & $6.3\cdot10^{-9}$  & $1.7\cdot10^{-11}$ & $2.9\cdot10^{-13}$\\
$0$ & $3200$ & $2.5\cdot10^{-10}$ & $1.8\cdot10^{-12}$ & $4.1\cdot10^{-17}$ & $3.2\cdot10^{-19}$\\
$1$ & $3200$ & $5.7\cdot10^{-8}$  & $3.3\cdot10^{-11}$ & $7.1\cdot10^{-15}$ & $2.7\cdot10^{-18}$\\
$3$ & $3200$ & $1.9\cdot10^{-6}$  & $4.5\cdot10^{-9}$  & $4.6\cdot10^{-12}$ & $9.1\cdot10^{-15}$\\
\hline
\end{tabular}
\end{center}

\medskip\noindent
\textbf{(c) The window.}  For $p=0.3$, $R=2$, $N=1600$ the exact value of
$\Pr\{|X-\nu|\le2\}$ is $0.1084764352$.  The leading term of Corollary~\ref{cor:window} alone
overshoots by a factor $1.00317$; including the $N^{-1}$ term gives the ratio $0.99999$.

\medskip\noindent
\textbf{(d) Structural identities.}  The size-bias identity \eqref{eq:sizebias} holds to
$10^{-51}$.  The splittings \eqref{eq:split} are identities in
$\mathbb Q\bigl[h,p^{-1},q^{-1}\bigr]$ for $1\le k\le5$ and $-3\le r\le4$, in both the $S_k$ and
the $T_k$ version.  The asymmetry \eqref{eq:asym} and the symmetry \eqref{eq:Ahsym} are
identities for $1\le k\le6$, and the parity \eqref{eq:Bhat-parity} for $n\le7$.  The mode rule of
Proposition~\ref{prop:mode} has no exception as a statement about the upper mode in $2388$
exact rational tests
($p\in\{\tfrac{17}{100},\tfrac3{10},\tfrac12,\tfrac57,\tfrac{43}{50},\tfrac13\}$,
$2\le N\le399$).  The Ces\`aro means of Proposition~\ref{prop:cesaro} were confirmed for
$n=1,2$: exactly, over a period, for $p\in\{\tfrac13,\tfrac25,\tfrac37,\tfrac58\}$ and
$r\in\{-2,-1,0,1,2\}$; and numerically, over $M=2\cdot10^{5}$ values of $N$, for
$p\in\{1/\sqrt7,\ \pi-3,\ \sqrt2-1\}$.  The coefficients $P_n$ of \S\ref{sec:checks} agree with
\cite{be-binom} exactly for $n\le4$.

\section{Concluding remarks}

	The elementary tail of \cite[Theorem 3.1]{paper22} is $-\log(m/(Np))$; the De~Moivre prefactor
	is the size-bias factor; and the cancellation observed there is precisely the identity
	\eqref{eq:sizebias}.  The right normalisation of the local mass is
	$\bigl(m/(Np)\bigr)b(m;N,p)$, equivalently $b(m-1;N-1,p)$, and that object is pure Appell.

	The three natural normalisations separate the roles of purity and symmetry.  The size-biased
	normalisation is governed by the Bernoulli Appell sequence and is pure but not
	$p\leftrightarrow q$ symmetric; the entropy normalisation is governed by the even sequence
	$\Bh_n$ generated by $\tfrac z2\coth\tfrac z2$ and restores the classical Stirling symmetry.
	The naive normalisation is governed by neither, its coefficients being a Bernoulli part plus an
	elementary tail.  The obstruction between the two Appell forms is $\Xi_k(p)\,t^{k}/(2k)$.

	The integer shift $r$ contributes only a power sum, and the Appell sequence decides which one:
	the left-endpoint rule in the size-biased form, and the trapezoidal rule in the symmetric form.
	This is the Euler--Maclaurin correspondence behind the $r$-dependence.

	Two extensions remain natural.  The first is the regime $r\to\infty$ with $N$, especially the
	crossover at $r\asymp\sqrt N$ into the Edgeworth regime, where the relevant object should be the
	uniform (error-function) normalisation of the incomplete beta function.  The second is the same
	analysis for other lattice distributions with an Appell structure --- Poisson, negative
	binomial, hypergeometric --- where a size-bias identity again exists and one may ask which
	Appell sequence replaces $B_n$ and $\Bh_n$.

\backmatter

\section*{Declarations}

\textbf{Funding.}\enspace The author did not receive support from any organization
for the submitted work.


\end{document}